\documentclass{article}

\usepackage[english]{babel}
\usepackage{graphicx} 
\usepackage{ctex}
\usepackage{amsmath}
\usepackage{amsthm,amsmath,amssymb}
\usepackage{mathrsfs}
\usepackage{color}
\usepackage[letterpaper,top=2cm,bottom=2cm,left=3cm,right=3cm,marginparwidth=1.75cm]{geometry}
\newtheorem{theorem}{Theorem}[section]
\newtheorem{lemma}[theorem]{Lemma}
\usepackage{amsmath}
\usepackage{graphicx}
\usepackage[colorlinks=true, allcolors=blue]{hyperref}
\usepackage[numbers]{natbib}

\title{Oblique boundary value problems for $(n−1)$-type augmented Hessian equations}
\author{
  Zhibo Hu\textsuperscript{1}, Feida Jiang\textsuperscript{2,3} \\
  \small \textsuperscript{1}Center for Mathematics and Interdisciplinary Sciences, Fudan University, Shanghai 200433, PR China \\
   \small \textsuperscript{2}Shanghai Institute for Mathematics and Interdisciplinary Sciences (SIMIS), Shanghai 
200433, PR China \\
  \small \textsuperscript{3}School of Mathematics and Shing-Tung Yau Center of Southeast University, Southeast University, \\
  \small Nanjing 211189, PR China
}
\date{}

\begin{document}
\nocite{*}
\maketitle
\begin{abstract}
    In this paper, we study the global regularity of oblique boundary value problems for $(n−1)$-type augmented Hessian equations on a bounded domain, without imposing any convexity condition either on the domain or on the matrix-valued function in the $(n−1)$-type augmented Hessian equations. We establish a global existence and uniqueness theory for classical elliptic solutions by deriving global a priori estimates up to second-order derivatives. Besides the known applications for $(n-1)$ Monge-Ampère operators in optimal transportation and geometric optics, the general theory here embraces equations raised by Li-Sheng [Some Dirichlet problems arising from conformal geometry. Pac. J. Math. 251(2011), 337-359] and Guan-Qiu-Yuan [Fully nonlinear elliptic equations for conformal deformations of Chern--Ricci forms. Adv. Math. 343(2019), 538-566]. 
\end{abstract}

\textbf{Keywords:} Oblique boundary value problems$\,\cdot\,$$(n−1)$-type Augmented Hessian

equations$\,\cdot\,$Global second-derivative estimates$\,\cdot\,$ Gradient estimates$\,\cdot\,$Classical solvability.

\section{Introduction}
In this article, we consider oblique boundary value problems for  $(n-1)$-type augmented Hessian equations
\begin{equation}
\tag{1.1}
      \left\{
\begin{aligned}
&\mathcal{F}[u]:=F[\gamma\Delta uI-D^2u-A(x,u,Du)]=B(x,u,Du)\ \ \mathrm{in} \  \Omega,\\
    &\mathcal{G}[u]:=G(x,u,Du)=0\ \ \mathrm{on} \ \partial\Omega,     
\end{aligned}
\right.
\end{equation}
whose equation was raised by Li-Sheng \cite{li2011some} in 2011, and Guan-Qiu-Yuan \cite{guan2019fully} dealt with an equivalent form of this equation in 2019. They solved Dirichlet problems about this equation. In conformal geometry, many problems can be reduced to an oblique boundary value problem for a nonlinear equation \cite{Chen_2009,sheng2013k}. Therefore, it is natural to consider the existence results for the equations in (1.1) under oblique boundary conditions.

In (1.1), $\Omega$ is a bounded domain in $\mathbb{R}^n$ with smooth boundary, $\gamma>1$ which is consistent with the original paper. $Du$ and $D^2u$ denote the gradient vector and the Hessian matrix of solution $u\in C^2(\Omega)$. A is a $n\times n$ symmetric matrix function defined on $\Omega\times \mathbb{R}\times \mathbb{R}^n$, B is a scalar valued function on $\Omega\times \mathbb{R}\times \mathbb{R}^n$. Let $x,z,p,r$ represent the points in $\Omega,\mathbb{R},\mathbb{R}^n$ and $\mathbb{R}^{n\times n}$ respectively. 

The meaning of oblique boundary condition for $u\in C^1({\overline{\Omega}})$ is when $G\in C^1(\partial\Omega\times \mathbb{R}\times \mathbb{R}^n)$, it holds
\begin{equation}
\tag{1.2}
    G_p(x,u,Du)\cdot\nu(x)\ge\beta_0, \ \ \mathrm{on} \ \partial\Omega,
\end{equation}
where $\nu$ is the unit inner normal vector field on $\partial\Omega$ and $\beta_0$ is a positive constant. If $G_p\cdot \nu(x)>0$ on all of $\partial\Omega\times \mathbb{R}\times \mathbb{R}^n$, we will simply refer to $G$ as oblique.

Let $\mathbb{S}^n$ represent the linear space of $n\times n$ symmetric matrices, function $F$ in $(1.1)$ is defined on an open, convex cone $\Gamma$ in $\mathbb{S}^n$, with vertex at 0, containing the positive cone $\Gamma^+$. As in Li-Sheng \cite{li2011some}, we can let $\Gamma^+$ satisfy $\Gamma_n^+\subset\Gamma^+\subset\Gamma\subset\Gamma_1^+$. For convenience, we assume that $F\in C^2(\Gamma)$ satisfies a subset of the following properties.

\vspace{0.5cm}
\noindent \textbf{F1} \ \ $F$ is monotone increasing in $\Gamma$, that is
\begin{equation}
\tag{1.3}
    F_r:=F_{r_{ij}}=\{F^{ij}\}=\{\frac{\partial F}{\partial r_{ij}}\}>0, \, \mathrm{in} \  \Gamma.
\end{equation}

\noindent \textbf{F2} \ \ $F$ is concave in $\Gamma$, that is
\begin{equation}
\tag{1.4}
    \frac{\partial^2F}{\partial r_{ij}\partial r_{kl}}\eta_{ij}\eta_{kl}\le0,\, \mathrm{in} \  \Gamma,
\end{equation}
\indent for all symmetric matrices $\{\eta_{ij}\}\in \mathbb{S}^n$.

\noindent \textbf{F3}\ \ $F(\Gamma)=(a_0,\infty)$ for constant $a_0\ge-\infty$ with \begin{equation}
    \tag{1.5}
    \underset{r_0\in\partial\Gamma}{\mathrm{min}} \underset{r\to r_0}{\mathrm{limsup}}F(r)=a_0.
\end{equation}

\noindent \textbf{F4}\ \ for given constants $a,b$ satisfying $a_0<a<b$, there exists a constant $\delta_0>0$ such that

\indent $\mathcal{F}(r):=\mathrm{trace}(F_r)\ge\delta_0$ if $a<F(r)<b$.

\noindent \textbf{F4}$^+$\ \ $\mathcal{F}(r)\to \infty$ uniformly for $a\le F(r)\le b$ as $|r|\to\infty$.

If $F$ satisfies the above properties, we say the operator $\mathcal{F}$ also satisfies them. Note that when $\Gamma$ is closed under orthogonal transformations and $F$ is a symmetric function of the eigenvalues $\lambda_1,...\lambda_n$ of the matrix $r$, we say $\mathcal{F}$ is orthogonally invariant. When $A=0$, the above conditions are modelled on the conditions introduced for the study of the Dirichlet problem for the basic Hessian equations, by Caffarelli et al. \cite{caffarelli1985dirichlet} and Trudinger \cite{trudinger1995dirichlet}.

So we also assume that the operator $F$ is homogeneous of degree one and for any $r\in\Gamma$, then it can be shown that
\begin{equation}
    \tag{1.6}
    \sum_{i,j}r_{ij}F^{ij}=F.
\end{equation}
For the present problem, correspondingly, the function $B$ on the right-hand side of the equation (1.1) must also satisfy a similar condition.

As is known, the $(n−1)$ Monge-Ampère equation which was introduced by Harvey-Lawson in a series of papers \cite{harvey2011dirichlet,harvey2012geometric}. Following them, we say a $(n−1)$ operator refers to the operator whose variable is $\Delta u-D^2u$. More information can be found in \cite{fu2015form,jiao2023class,chu2023fully,dong2026dirichlet} and references therein. The $(n-1)$-type operator in this paper is defined as an operator whose variable is
\begin{equation}
    \notag
    \gamma\Delta u-D^2u, \, \gamma>1.
\end{equation}
The term ``augmented'' refers to the presence of an additional matrix $A(x,z,p)$. Next we recall some results about oblique boundary value problems for fully nonlinear elliptic partial differential equations.

Following previous works that considered the case $\gamma>1$, an important observation is made: $F$ is a uniformly elliptic operator, in the sense of the definition in \cite{lieberman1986nonlinear}. This is straightforward to verify.

We set 
\begin{equation}
    \tag{1.7}
    \widetilde{F}^{ij}=\frac{\partial F}{\partial u_{ij}}.
\end{equation}
Then it has
\begin{equation}
    \tag{1.8}
   \widetilde{F}^{ij}=\delta_{ij}\gamma\sum_{k=1}^nF^{kk}-F^{ij}.
\end{equation}

So the eigenvalues of matrix $\{\widetilde{F}^{ij}\}_{n\times n}$ are $\gamma\mathcal{F}-\lambda_i$, where $\lambda_i,i=1,2...n$ are the eigenvalues of matrix $\{F^{ij}\}_{n\times n}$. We note $\lambda_{\min}=\min\{\lambda_1,\lambda_2,..\lambda_n\},\lambda_{\max}=\max\{\lambda_1,\lambda_2,..\lambda_n\}$. Then the maximum and minimum eigenvalues of $\{\widetilde{F}^{ij}\}_{n\times n}$ are $\gamma\mathcal{F}-\lambda_{\min}$ and $\gamma\mathcal{F}-\lambda_{\max}$. This means 
\begin{equation}
    \tag{1.9}
    \frac{\gamma\mathcal{F}-\lambda_{min}}{\gamma\mathcal{F}-\lambda_{max}}\le\frac{\gamma\mathcal{F}}{(\gamma-1)\mathcal{F}}=\frac{\gamma}{\gamma-1}.
\end{equation}
So the operator $F$ is uniformly elliptic.

The main novelty of this paper is the systematic use of the uniform inequality, We found that using either notation $\{F^{ij}\}$ or $\{\widetilde{F}^{ij}\}$ alone in the proof introduces considerable complexity into the computations, making it difficult to reach a concise conclusion. However, when the two notations $\{F^{ij}\}$ and $\{\widetilde{F}^{ij}\}$ are used in conjunction, each leveraging its respective advantages, the efficiency of the proof is greatly enhanced. In fact, the systematic use of the notation $\{\widetilde{F}^{ij}\}$ amounts to the systematic exploitation of the uniform ellipticity induced by $\gamma>1$.
\vspace{0.3cm}

The oblique boundary value problem has a research history spanning over four decades, first studied by Trudinger, Lieberman, and Lions in the 1980s. Among their works, Lions and Trudinger \cite{lions1986linear} investigated the oblique boundary problem for linear equations, while Lieberman and  Trudinger \cite{lieberman1986nonlinear} established a general framework for treating oblique boundary value problems for quasilinear and fully nonlinear uniformly elliptic equations, setting a benchmark for subsequent studies. Later, in the 1990s, Urbas \cite{urbas1995nonlinear,urbas1996nonlinear,urbas1998oblique} studied the oblique boundary value problem for Hessian, curvature, and Monge–Ampère equations, proving the existence of solutions under certain conditions on the domain and the boundary. Subsequently, Gilbarg and Trudinger included this class of problems in Chapters 6 and 17 of their monograph \cite{gilbarg1977elliptic}.

During the 2000s–2010s, the oblique boundary value problem attracted relatively little attention, until around 2015, when Jiang and Trudinger \cite{jiang2018oblique,jiang2017oblique,jiang2019oblique} studied the oblique boundary value problem for augmented Hessian equations. Under suitable convexity, regularity, and smoothness assumptions on the domain and the functions involved, they established the existence of classical solutions, thereby reopening research on oblique boundary value problems for fully nonlinear elliptic equations. In the period 2010–2020, Jiang, Trudinger and their collaborators \cite{jiang2020dirichlet,jiang2014dirichlet,jiang2015dirichlet,jiang2016neumann} produced a substantial body of work on augmented Hessian equations and related problems.

Following these developments, a growing number of researchers have investigated the oblique boundary value problem for fully nonlinear elliptic equations. In particular, Wang \cite{wang2022gradient} obtained global gradient estimates for Hessian equations with oblique boundary conditions. Dong-Wei \cite{Dong2022Neumann} investigated a special case of the oblique boundary value problem for $(n-1)$ augmented Hessian equations, namely when the function $A=0$ and the boundary condition satisfies $G_p=\nu$, i.e., the Neumann boundary condition. They obtained the existence and uniqueness of admissible solutions to the equations by establishing a priori estimates up to the second order derivatives. Xu and Xu \cite{xu2017gradient} obtained global gradient estimates for prescribed mean curvature equation under semi-linear oblique boundary value condition. Han \cite{han2025tug} studied the oblique boundary value problem for the normalized $\mathit{p}$-Laplacian equation and applied it in the context of stochastic theory. Bessa and Ricarte \cite{bessa2023global} developed weighted Lorentz–Sobolev estimates for viscosity solutions of fully nonlinear elliptic equations with oblique boundary conditions under weakened convexity assumptions, and applied this method to derive Lorentz–Sobolev estimates for solutions. Xiang-Xiong-Zheng \cite{xiang2026oblique} studied oblique boundary value problems for degenerate Hessian quotient type equations. By constructing suitable a priori estimates, they established the existence and uniqueness of solutions and provided $C^{1,1}$ estimates.

Before we state our theorems, we denote by $C$ a constant in this paper that may vary in different contexts. For simplicity, we denote all of them uniformly by $C$.

The results of the present paper can, to some extent, be regarded as a continuation of the series of studies on oblique boundary value problems by Jiang and Trudinger \cite{jiang2018oblique,jiang2017oblique,jiang2019oblique}, enriching the theoretical framework they developed.

Now we start to state the main theorems in this paper. First we state the local and global second derivative estimate which extends the result in \cite{jiang2018oblique}. The global second derivative estimate is necessary for dealing with the boundary condition (1.2).

\vspace{0.2cm}
\begin{theorem}
  Let $u\in C^4(\Omega)$ be an admissible solution of equation $\mathrm{(1.1)}$ in $\Omega$ and $B\in C^2(\overline{\Omega}\times \mathbb{R}\times \mathbb{R}^n)$ satisfies $B>a_0$ on the range of $(x,u,Du)$. Assume one of the following conditions holds:
\vspace{0.1cm}
\noindent (i) $F1,F2,F3,F4^+$ hold;

\noindent (ii) $F1,F2,F3,F4$ hold, and $B$ is convex with respect to $p$.

Then we have estimate 
\begin{equation}
    \tag{1.10}
    \underset{\Omega}{\sup}|D^2u|\le C+n\, \underset{\partial\Omega}{\sup}|D^2u|,
\end{equation}
for $u\in C^4(\overline{\Omega})$, where the constant $C$ depends on $n,A,B,\gamma,F,\Gamma,\Omega$ and $||u||_{C^1;\Omega}$.  
\end{theorem}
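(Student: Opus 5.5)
We reduce (1.10) to a one-sided maximum principle argument for a pure second derivative. Write $W=\gamma\Delta u I-D^2u-A(x,u,Du)\in\Gamma\subset\Gamma_1^+$. Then $\operatorname{tr}W=((n-1)\gamma... )$; more precisely $\operatorname{tr}W=(n\gamma-1)\Delta u-\operatorname{tr}A>0$, so $\Delta u\ge -C$. Hence $\sup_\Omega|D^2u|\le C\bigl(1+\sup_\Omega\max_{|\xi|=1}u_{\xi\xi}\bigr)$ up to the factor $n$: if $u_{\xi\xi}\le M$ for all unit $\xi$, then the eigenvalues of $D^2u$ are bounded above by $M$ and, since their sum is $\ge -C$, each is bounded below by $-C-(n-1)M$. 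So it suffices to bound $M=\sup_{\Omega,|\xi|=1}u_{\xi\xi}$ by $C+\sup_{\partial\Omega}|D^2u|$, which produces exactly the factor $n$ in (1.10).

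Consider the auxiliary function
\[
v(x,\xi)=u_{\xi\xi}+\tfrac{a}{2}|Du|^2+b\,x_1^2
\]
(or $e^{\kappa x_1}$-type term) with constants $a,b$ to be chosen. If the maximum over $\overline\Omega\times S^{n-1}$ is attained on $\partial\Omega$ we are done; otherwise at an interior point $x_0$ with direction $\xi=e_1$, we apply the linearized operator $L=\widetilde F^{ij}D_{ij}-(\text{first order terms from }D_pA,D_pB)$. Differentiating the equation twice in $e_1$ and using F2 (concavity) gives
\[
\widetilde F^{ij}u_{11ij}\ge D_{11}[B]+F^{ij}D_{11}[A_{ij}]\quad(\text{concavity drops }F^{ij,kl}\text{ term}),
\]
where $D_{11}[B]$ and $D_{11}[A_{ij}]$ contain terms up to quadratic in $D^2u$ through $B_{pp}$, $A_{pp}$, and terms involving $u_{11k}$ which are absorbed by the first-order part of $L$. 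The term $\frac a2|Du|^2$ yields $a\widetilde F^{ij}u_{ki}u_{kj}\ge a(\gamma-1)\mathcal F|D^2u|^2$ by the uniform ellipticity (1.9), i.e.\ $\widetilde F\ge(\gamma-1)\mathcal F I$; this is where we systematically use $\widetilde F$. The term in $x_1$ gives $Lb x_1^2\ge 2b(\gamma-1)\mathcal F-Cb$.

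The main obstacle is controlling $F^{ij}A_{ij,p_kp_l}u_{k1}u_{l1}$ and $B_{p_kp_l}u_{k1}u_{l1}$, which are quadratic in $D^2u$ with no sign. The $A$ term is bounded by $C\mathcal F|D^2u|^2$ (since $F^{ij}>0$, $|F^{ij}|\le\mathcal F$) and absorbed by $a(\gamma-1)\mathcal F|D^2u|^2$ for $a$ large. For the $B$ term: in case (ii), $B$ convex in $p$ gives $B_{pp}u_{1\cdot}u_{1\cdot}\ge0$, the right sign, and remaining linear terms $C(1+|D^2u|)$ are beaten by $\mathcal F\ge\delta_0$ (F4, valid since $a<B<b$ on the range by F3 and the $C^1$ bound) times $|D^2u|^2$. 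In case (i), we instead bound $|B_{pp}|\,|D^2u|^2\le C|D^2u|^2$ and use F4$^+$: when $|D^2u|$ (hence $|W|$) is large, $\mathcal F$ is large, so $a(\gamma-1)\mathcal F|D^2u|^2$ dominates $C|D^2u|^2$. Either way $Lv>0$ at $x_0$ once $|D^2u(x_0)|$ exceeds a constant, contradicting the maximum, so $u_{11}(x_0)\le C$, and hence $M\le C+\sup_{\partial\Omega}|D^2u|$, giving (1.10).
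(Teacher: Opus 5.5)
Your proposal is correct and follows essentially the same route as the paper. You differentiate the equation twice, drop the $F^{ij,kl}$ term by concavity, and absorb the $A_{pp}$ quadratic term (and, under F4$^+$, the $B_{pp}$ term) using $\widetilde{F}\ge(\gamma-1)\mathcal{F}I$ on a gradient-square term; you then apply the maximum principle and get the lower bound and the factor $n$ from $(n\gamma-1)\Delta u>\mathrm{tr}(A)$. The differences are cosmetic: you use $\frac{a}{2}|Du|^2$ instead of the paper's $\frac{c}{2}u_\tau^2$, and you argue at an interior maximum over $(x,\xi)$ instead of building a global subsolution with $e^{a(x_1+R)}$, which makes your $b x_1^2$ term unnecessary (its correct lower bound is $2b(\gamma-1)\mathcal{F}-Cb(1+\mathcal{F})$, not $2b(\gamma-1)\mathcal{F}-Cb$, because of the $A_p$ term).
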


\vspace{0.4cm}

For the second derivative estimate on boundary, more assumptions are needed. As in \cite{jiang2018oblique}, we assume the function $G\in C^2(\partial\Omega\times \mathbb{R}\times \mathbb{R}^n)$ is concave in $p$, which means $G_{pp}\le0$ on $\partial\Omega\times \mathbb{R}\times \mathbb{R}^n$. It is easy to see that the quasilinear case is a special case of concavity, that is 
\begin{equation}
    \tag{1.11}
    G(x,z,p)=\beta(x,z)\cdot p-\varphi(x,z).
\end{equation}
At this moment, $\beta=G_p$ and $\varphi$ are defined on $\partial\Omega\times \mathbb{R}$. For a function $u\in C^1$, if $G_{pp}(x,u,Du)\le0$ on $\partial\Omega$, we say $G$ is concave in $p$ with respect to $u$. 

Finally, to complete our hypotheses, we assume that the cone $\Gamma$ lies strictly in a half space in the sense that $r\le \mathrm{trace}(r) I$ for all $r\in\Gamma$, that is $\Gamma\subset\mathcal{P}_{n-1}:=\{r\in\mathbb{S}^n:\mathrm{trace}(r) I-r\ge0\}$, which is in accordance with the classical Hessian equation problems whose cones are $\Gamma_k$ for $k\ge2$. Now we state the result of global second derivative estimation

\vspace{0.4cm}

\begin{theorem}
 Let $u\in C^4(\Omega)\cap C^3(\overline{\Omega})$ be an admissible solution of the boundary value problem (1.1) in a $C^3$ domain $\Omega\subset \mathbb{R}^n$. Assume that $F$ satisfies conditions $F1-F4$, $B\in C^2(\overline{\Omega}\times \mathbb{R}\times \mathbb{R}^n)$ satisfies $B>a_0$ on the range of $(x,u,Du)$., $G\in C^2(\partial\Omega\times \mathbb{R}\times \mathbb{R}^n)$ is oblique and quasilinear, $\Gamma\subset\mathcal{P}_{n-1}$ and either $F4^+$ holds or $B$ is independent of $p$. Then we have the estimate
\begin{equation}
    \tag{1.12}
    \underset{\Omega}{\sup}|D^2u|\le C,
\end{equation}
where $C$ is a constant depending on $F,\gamma,A,B,G,\Omega,\beta_0$ and $||u||_{1,\Omega}$.   
\end{theorem}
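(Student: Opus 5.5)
By Theorem 1.1, it suffices to bound $\sup_{\partial\Omega}|D^2u|$. More precisely, we aim for a boundary estimate of the form
\[
\sup_{\partial\Omega}|D^2u|\le \varepsilon M + C_\varepsilon, \qquad M:=\sup_\Omega|D^2u|,
\]
with $\varepsilon$ small; combined with (1.10), this yields (1.12). Following Jiang--Trudinger \cite{jiang2018oblique}, we split the boundary Hessian into three kinds of components: tangential-tangential, mixed tangential-normal, and the double normal derivative $u_{\beta\beta}$.

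\textbf{Mixed derivatives.} We differentiate the quasilinear boundary condition $\beta\cdot Du=\varphi$ tangentially, which shows that $D_\tau(\beta\cdot Du)$ is bounded on $\partial\Omega$. Since $\beta\cdot\nu\ge\beta_0$, this controls $u_{\tau\beta}$ on $\partial\Omega$ up to terms of order $|Du|$. The same bound is extended into a neighbourhood as follows. We set $v=\beta\cdot Du-\varphi$, with $\beta$ and $\varphi$ extended smoothly. Its linearized operator $L=\widetilde F^{ij}D_{ij}-(\ldots)D_p$ is computed using (1.8) together with uniform ellipticity (1.9), and the resulting term $|Lv|\le C(1+M)\mathcal F$ is dominated by a barrier. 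For the barrier we take the standard $\psi=-\log(1+kd)+\ldots$ type function, or a function of the distance $d$ and $|x-x_0|^2$. Such a barrier satisfies $L\psi\le -c\mathcal F$ (times $1+M$), using that $\widetilde F^{ij}$ is comparable to $\mathcal F I$; this holds regardless of any convexity of $\Omega$ or $A$, which is the point of using $\widetilde F$. Here F4 ensures $\mathcal F\ge\delta_0$, so constant terms can be absorbed. The outcome is $|u_{\tau\beta}|\le C(1+M)^{1/2}$, or at least $\le \varepsilon M+C_\varepsilon$.

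\textbf{Double normal derivative.} We estimate $u_{\beta\beta}$ from the equation. Since $\Gamma\subset\mathcal P_{n-1}$ and $\Gamma\subset\Gamma_1^+$, every $r=\gamma\Delta uI-D^2u-A\in\Gamma$ satisfies $r\le \mathrm{tr}(r)I$ and $\mathrm{tr}(r)=((n\gamma-1)\Delta u-\mathrm{tr}A)>0$. This gives a one-sided bound on $\Delta u$ from below, and then every eigenvalue of $D^2u$ is bounded in terms of $\Delta u$. An upper bound on $u_{\beta\beta}$ then follows from an upper bound on the tangential second derivatives together with the mixed bound. For the lower bound, we argue at a boundary point where $u_{\beta\beta}$ is large negative and use that the trace of $r$ stays positive. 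Alternatively, we apply a barrier argument to $v$ in the normal direction, which gives $|D_\beta v|\le C(1+M)^{1/2}$ at the boundary; since $D_\beta v\approx u_{\beta\beta}\beta\cdot\beta+\ldots$, this bounds $|u_{\beta\beta}|$ by $\varepsilon M+C$.

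\textbf{Tangential derivatives, the main obstacle.} We take the maximum of $u_{\xi\xi}$ over $x\in\partial\Omega$ and unit tangent vectors $\xi$, attained at some $x_0$ and $\xi=e_1$. The aim is to show $u_{11}(x_0)\le \varepsilon M+C$. We consider $w=u_{\xi\xi}$, with $\xi$ a smoothly extended tangential field, minus a correction $u_{11}(x_0)|\xi|^2$-type term. On the boundary we have $D_\beta w\le C(1+M)$, obtained by differentiating the boundary condition twice tangentially; here the quasilinear structure is used, since the $G_{pp}$ terms vanish. Concavity F2 gives $Lw\ge -C(1+M)\mathcal F$ in the interior. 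In the absence of domain convexity, the term $D_\beta w$ contains a contribution $-u_{11}\,(\text{curvature})$ of uncontrolled sign. We handle it by comparing $w$ with $u_{11}(x_0)$ times a barrier, chosen so that the ratio argument of \cite{jiang2018oblique} closes. Because $\widetilde F$ is uniformly elliptic, a quadratic barrier $K\big(1+M\big)(d - Nd^2)$ works at a cost of $\mathcal F$ in every direction. This is where F4$^+$, or the independence of $B$ from $p$, is needed: it absorbs the $B_pD_pw$ terms, which are of order $M$, either through the growth of $\mathcal F$ or by their absence. Combining the three steps gives $\sup_{\partial\Omega}|D^2u|\le\varepsilon M+C$, and Theorem 1.1 concludes the proof.
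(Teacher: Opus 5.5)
Your reduction via (1.10), the three-way split and the distance barriers built with Lemma 2.1 (no convexity of $\Omega$) match the paper. However, both load-bearing boundary steps either lack their key mechanism or rest on a false claim.

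\textbf{The factor $\epsilon$ is never produced.} Because (1.10) carries the factor $n$, you need $\sup_{\partial\Omega}|D^2u|\le\epsilon M+C_\epsilon$, and none of your barrier arguments yields a small coefficient.

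For $v=\beta\cdot Du-\varphi$ with $|Lv|\le C(1+M)\mathcal F$, a barrier gives only $D_\beta v\le C(1+M)$ at $x_0$, i.e.\ $u_{\beta\beta}\le C(1+M)$. The claimed $(1+M)^{1/2}$ has no source. (For $u_{\tau\beta}$ no barrier is needed: (3.17) gives $|u_{\tau\beta}|\le C$ directly.) The paper gets $\epsilon$ in Lemma 3.1 from $G+\frac a2|Du-Du(x_0)|^2$ with $a=(1+\epsilon_1M_2)^{-1}$. By (3.21) its $L$ contains $a\widetilde F^{ij}u_{ki}u_{kj}$, which absorbs the linear term $\widetilde F^{ij}\widetilde\beta_{ik}u_{jk}$ at cost $C\mathcal F/a$, and (3.24) handles the $B_p$ term. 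The price is that the boundary comparison needs $|Du-Du(x_0)|\le C(1+M_2')|x-x_0|$, i.e.\ a two-sided tangential bound.

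In the tangential step, your inequality ``$Lw\ge-C(1+M)\mathcal F$ by concavity'' is false. (3.1) contains $F^{ij}A_{ij}^{kl}u_{k\tau}u_{l\tau}$ and $B^{p_kp_l}u_{k\tau}u_{l\tau}$, both quadratic in $D^2u$; the second is what F4$^+$, or the restriction on $B$, is needed for here. The paper works instead with $u_{\tau\tau}+\frac c2|u_\tau|^2$. Lemma 2.1 then supplies $c(\gamma-1)\mathcal F|Du_\tau|^2$, which absorbs the $A_{pp}$ term (this is where $\gamma>1$ replaces any structure condition on $A$) and leaves the good term $C\mathcal F|Du_\tau|^2$ in (3.8). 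The function $\tilde v_1-f\tilde v_1(0)-K(1+M_2)\phi$ cannot peak at $0$, by (3.46) and obliqueness, so it peaks inside. There (3.8) forces $|Du_1|\le C\sqrt{1+M_2}$, and choosing the defining function with $-K\phi\le\epsilon/4$ yields $\epsilon M_2+C_\epsilon$. Without that quadratic good term, comparison with $K(1+M)(d-Nd^2)$ cannot give any smallness.

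\textbf{The ``from the equation'' bound on $u_{\beta\beta}$ is false as stated.} An upper bound on $u_{\tau\tau}$ plus the mixed bound does not bound $u_{\beta\beta}$ from above. Take $A=0$, $F=(\det)^{1/n}$ on the positive cone (which lies in $\mathcal P_{n-1}$), $\beta=\nu=e_n$, and
$D^2u=\mathrm{diag}(-m,0,\dots,0,\frac{\gamma}{\gamma-1}m+\delta)$. Then $M[u]$ has eigenvalues $\frac{2\gamma-1}{\gamma-1}m+\gamma\delta$, $\frac{\gamma}{\gamma-1}m+\gamma\delta$ ($n-2$ times) and $(\gamma-1)\delta$, and $\delta>0$ can be tuned so that $F(M[u])=B$. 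Every $u_{\tau\tau}\le0$ and $u_{\tau\nu}=0$, yet $u_{\nu\nu}\to\infty$ as $m\to\infty$.

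With a two-sided tangential bound your route does work, and more simply than Lemma 3.1. Write $D^2u=u_{\beta\beta}\,\ell\otimes\ell+E$ with $\ell=\nu/(\beta\cdot\nu)$ and $|E|\le C(1+M_2')$. Then $M[u]\ge((\gamma-1)|\ell|^2u_{\beta\beta}-C(1+M_2'))I$, and F1 with homogeneity give $u_{\beta\beta}\le C(1+M_2')$. You would still need $M_2'\le\epsilon M+C_\epsilon$, i.e.\ a lower bound on $u_{\tau\tau}$, which the proposal never supplies. Your (correct) consequence of $\Gamma\subset\mathcal P_{n-1}$, namely $D^2u\ge-((n-1)\gamma-1)\Delta u\,I-C$, bounds $u_{\tau\tau}$ below only through $\Delta u$, hence through $u_{\nu\nu}$, which is circular, as the example shows.

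Be aware that the paper settles this point only by citing the Jiang--Trudinger $\mathcal P_{n-1}$ argument. There the cone constrains $D^2u-A$ and so bounds the tangential trace from below; for $M[u]$ it does not. This step needs a genuine argument on either route.
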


\vspace{0.4cm}
In order to make use of Theorem 1.2 to prove the existence of smooth solutions to $(1.1)$, gradient and solution estimates are also needed. Our gradient estimates are motivated by Jiang and Trudinger in \cite{jiang2017oblique,jiang2019oblique}. But using the condition of uniform ellipticity we can greatly simplify the calculation process and improve the conditions we need to assume.
 
First we introduce the growth conditions on $A$ and $B$ as follows.
\begin{equation}
    \tag{1.13}
    \begin{split}
         D_xA,D_xB,D_zB,D_zA=o(|p|^3),\\
         D_zA\ge o(|p|^2)I,D_zB\ge o(|p|^2),\\
         D_pA,D_pB=o(|p|^2),
    \end{split}
\end{equation}
as $|p|\to\infty$, uniformly for $x\in\Omega,|z|\le M$ for any $M>0$.
Namely, there exists a monotone function $\omega(t)\to 0$, such that 
\begin{equation}
    \notag
    \begin{split}
         |D_xA|+|D_xB|+|D_zB|+|D_zA|\le\omega(|p|)|p|^3,\\
         D_zA\ge \omega(|p|)|p|^2I,D_zB\ge \omega(|p|)|p|^2,\\
         |D_pA|+|D_pB|\le \omega(|p|)|p|^2.
    \end{split}
\end{equation}

as $|p|\to\infty$, uniformly for $x\in\Omega,|z|\le M$ for any $M>0$. Note that the analogous natural growth conditions appear in many articles and theories, such as in \cite{gilbarg1977elliptic}.

Now we give a gradient estimate for oblique semilinear boundary conditions, that is when $\beta$ in (1.11) is independent of $z$ so that the second equation (1.1) can be written as follows.
\begin{equation}
    \tag{1.14}
    \mathcal{G}[u]=G(x,u,Du)=\beta(x)\cdot Du-\varphi(x,u)=0, \,  \mathrm{on} \  \partial\Omega
\end{equation}

\vspace{0.4cm}
\begin{theorem}
Let $u\in C^3(\Omega)\cap C^2(\overline{\Omega})$ be an admissible solution of the boundary value problem $\mathrm{(1.1)}$ for an oblique, semilinear boundary operator $\mathcal{G}$ in a bounded $C^3$ domain $\Omega\subset \mathbb{R}^n$. Assume that $\mathcal{F}$ satisfies $F1$, $F3$ and $F4$ with $b=\infty$, $A,B\in C^1(\overline{\Omega}\times \mathbb{R}\times \mathbb{R}^n)$, satisfy $\mathrm{(1.13)}$, $b_0:=\underset{\Omega}{\inf}\,B(x,u,Du)>0$ and $\beta\in C^2(\partial\Omega), \varphi\in C^2(\partial\Omega\times \mathbb{R})$. Assume also $p\cdot A_p\le O(|p|^2)I, \ \ p\cdot B_p\le O(|p|^2), A=O(|p|^2)$ in $\mathrm{(1.13)}$. Then we have the estimate
\begin{equation}
    \tag{1.15}
    \sup_\Omega|Du|\le C,
\end{equation}
Where $C$ is a constant depending on $F,A,\gamma,B,\Omega,b_0,\beta,\varphi$ and $||u||_{{C^0};\Omega}$.

\end{theorem}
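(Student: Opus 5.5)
We follow the strategy of Jiang--Trudinger \cite{jiang2017oblique,jiang2019oblique}, adapted to the uniformly elliptic setting given by (1.8)--(1.9). We bound the maximum of an auxiliary function of the form
\begin{equation}
\notag
v=\log|Du'|^2+\kappa\,\phi(u)+\alpha\, d,\qquad Du'=Du-(\beta\cdot Du)\,\frac{\beta}{|\beta|^2}\ \text{(tangential part near }\partial\Omega),
\end{equation}
or, more simply, $v=\tfrac12\log\big(|Du|^2\big)+g(u)+\alpha d$. Here $d$ is the distance to $\partial\Omega$, smoothly extended, and $g$ is chosen so that $g'>0$ and $g''-\epsilon g'^2\ge c>0$ on $[-M,M]$, with $M=\|u\|_{C^0}$. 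A typical choice is $g(u)=-\tfrac{1}{\epsilon}\log(2M+1-u)$, or an exponential-type function.

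\textbf{Boundary maximum.} Suppose $v$ attains its maximum at $x_0\in\partial\Omega$. Write $Du=Du_\tau+\frac{\varphi-\beta_\tau\cdot Du}{\beta\cdot\nu}\nu$ using (1.14). The semilinear structure lets us differentiate the boundary condition tangentially with no $u_z$-terms in $\beta$. Then $\beta\cdot Dv(x_0)\le0$, combined with tangential differentiation of $\beta\cdot Du=\varphi$ and $\beta\in C^2$, $\varphi\in C^2$, gives $\beta\cdot D|Du|^2\ge -C|Du|^2-C$. Choosing $\alpha$ large, $\alpha\beta\cdot\nu\ge\alpha\beta_0$ dominates these terms, which contradicts the maximality unless $|Du(x_0)|\le C$. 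Some care is needed because $\beta$ is not normal. The standard device is to use $\beta$-directional derivatives of $|Du|^2$ and absorb the cross terms with the $C^2$ norms of $\beta$ and $\partial\Omega$.

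\textbf{Interior maximum.} At an interior maximum we have $Dv=0$ and $\widetilde F^{ij}D_{ij}v\le0$. We differentiate the equation in $x_k$ and contract with $u_k$:
\begin{equation}
\notag
\widetilde F^{ij}u_{ijk}u_k=F^{ij}(A_{ij})_ku_k+ B_ku_k,
\end{equation}
where $(A_{ij})_k=A_{x_k}+A_zu_k+A_pu_{\cdot k}$, and similarly for $B$. Using (1.13), the terms $A_x$, $A_z$, $B_x$, $B_z$ are $o(|p|^3)\cdot|Du|\,\mathcal F$, and the sign conditions on $D_zA$, $D_zB$ handle the $|p|^4$ contributions. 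The $A_p, B_p$ terms are $A_p\cdot D(|Du|^2)/2$, and $Dv=0$ rewrites them as $-|Du|^2(g'Du+\alpha Dd)\cdot A_p$. The hypotheses $p\cdot A_p\le O(|p|^2)$ and $p\cdot B_p\le O(|p|^2)$ bound these one-sidedly by $C g'|Du|^4\mathcal F$. Uniform ellipticity (1.9) gives $\widetilde F^{ij}\xi_i\xi_j\ge(\gamma-1)\mathcal F|\xi|^2$. The good term $\widetilde F^{ij}D_ig D_jg$, that is $(g''-\epsilon g'^2)\widetilde F^{ij}u_iu_j$ after absorbing $\widetilde F^{ij}u_{ki}u_{kj}/|Du|^2$ against $(\widetilde F^{ij}D_i|Du|^2D_j|Du|^2)/|Du|^4$, is therefore at least $c(\gamma-1)\mathcal F|Du|^2$. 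Here we are working with the normalized $v$, so all terms are divided by $|Du|^2$. This dominates the bad terms $o(1)|Du|^2\mathcal F+C|Du|\mathcal F$. F4 with $b=\infty$ gives $\mathcal F\ge\delta_0$, which handles the $B$-terms that are not multiplied by $\mathcal F$. The condition $b_0>0$ together with (1.6) controls $F^{ij}A_{ij}$ via $A=O(|p|^2)$, since $F^{ij}(\gamma\Delta u\delta_{ij}-u_{ij})=B+F^{ij}A_{ij}$. This lets the second-order terms be compared with $\mathcal F|Du|^2$.

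The main obstacle is this interior computation. The matrix $A$ enters through $F^{ij}$ and not through $\widetilde F^{ij}$, so the $A_p$-term $F^{ij}A_{ij,p}\cdot Du_k u_k$ must be converted using $Dv=0$ and bounded by $\mathcal F$ times the one-sided growth condition. For this we need the trace bound $|F^{ij}\eta_{ij}|\le\mathcal F|\eta|$, valid since $F^{ij}>0$. The constants $\epsilon$ and $c$ in $g$ must then be fixed so that the resulting $O(|p|^2)$ coefficients are absorbed. Combining the two cases gives $\sup_\Omega|Du|\le C$, which is (1.15).
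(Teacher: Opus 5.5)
\textbf{The gap is in your boundary case.} With $v=\tfrac12\log|Du|^2+g(u)+\alpha d$, the condition $D_\beta v(x_0)\le 0$ yields a contradiction only if $\beta\cdot D|Du|^2\ge -C(1+|Du|^2)$. This does not follow from the boundary condition.

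Differentiating $\beta\cdot Du=\varphi$ tangentially controls $u_{\beta\tau}=\widetilde{D}_\tau\varphi-(D_\tau\beta)\cdot Du$, but only for tangential $\tau$. Splitting $Du=\delta u+(D_\nu u)\nu$ gives
\[
\beta\cdot D|Du|^2=2D^2u(\delta u,\beta)+2(D_\nu u)\,u_{\nu\beta}.
\]
The first term is harmless. In the second, $u_{\nu\beta}=(u_{\beta\beta}-u_{\beta'\beta})/(\beta\cdot\nu)$ contains the double oblique derivative $u_{\beta\beta}$, and nothing at first order controls it. Its coefficient $D_\nu u=(\varphi-\beta'\cdot\delta u)/(\beta\cdot\nu)$ is in general of size $|Du|$. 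Even for Neumann data $\beta=\nu$ the term is $2\varphi\,u_{\nu\nu}$. The tangential conditions $D_\tau v(x_0)=0$ carry no information about it. So this is not a cross term that $C^2$ norms of $\beta$ and $\partial\Omega$ can absorb.

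Your alternative $Du'=Du-(\beta\cdot Du)\beta/|\beta|^2$ fails for the same reason. It is the part of $Du$ orthogonal to $\beta$, not the tangential part, so $Du'\cdot\nu\neq 0$ in general and $u_{\beta\beta}$ reappears. In addition, $|Du'|$ alone does not control $|Du|$ in the interior.

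\textbf{The missing idea.} The paper replaces $|Du|^2$ by the quadratic form $Q:=|\delta u|^2+G^2$ of (4.10). Here $G=\beta\cdot Du-\varphi$ is normalized so that $\beta\cdot\nu=1$, and $\nu,\beta,\varphi$ are extended to $\overline{\Omega}$.
\begin{itemize}
\item $Q=a_{kl}u_ku_l+b_ku_k+c$ with $\{a_{kl}\}$ uniformly positive definite, so $Q$ is comparable to $|Du|^2$ everywhere.
\item On $\partial\Omega$ we have $D_\beta(G^2)=2G\,D_\beta G=0$.
\item The commutator $[D_\beta,\delta_k]$ is first order, and $\delta_k(D_\beta u)=\delta_k\varphi$ on $\partial\Omega$. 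Hence $D_\beta(\delta_k u)=\delta_k\varphi+O(|Du|)$.
\end{itemize}
Together these give $D_\beta Q\ge -C(1+|Du|^2)$, so the barrier term ($-kM_1^2\phi$ in the paper, or your $\alpha d$) dominates.

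The paper then uses the non-normalized function $Q+M_1^2(\alpha e^{K(u-u_0)}-k\phi)$. In the interior it combines (4.9) for $\mathcal{L}Q$ with $\widetilde{F}^{ij}u_{ij}=B+F^{ij}A_{ij}\ge -C\mathcal{F}|Du|^2$ and $\widetilde{F}^{ij}u_iu_j\ge(\gamma-1)\mathcal{F}|Du|^2$. The first of these is absorbed by taking $K$ large.

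Your log-normalized interior argument is broadly sound and can be adapted to $Q$. You will pick up extra terms $\widetilde{F}^{ij}\widetilde{\beta}_{ik}u_{jk}$ from $D_x a_{kl}$. Two further points need fixing:
\begin{itemize}
\item Your sample choice $g(u)=-\frac{1}{\epsilon}\log(2M+1-u)$ gives $g''-\epsilon g'^2\equiv 0$, not $\ge c>0$.
\item No logarithmic choice can absorb $g'\widetilde{F}^{ij}u_{ij}\ge -Cg'\mathcal{F}|Du|^2$ without a smallness condition on $\|u\|_{C^0}$. You need the exponential choice $\epsilon e^{K(u-u_0)}$, with $K$ large and $\epsilon$ small.
\end{itemize}
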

\vspace{0.4cm}

An additional feature is that we don't need the concavity condition $F2$ in Theorem 1.3. It can be removed when $F$ is homogeneous of degree one and more generally. We will illustrate it in Sect. 3.   

\vspace{0.4cm}
\noindent \textbf{Remark 1.1} By virtue of uniform ellipticity of $\mathcal{F}$, The hypotheses we make in Theorem 1.1-1.3 are much simpler when compared with the hypothesis in \cite{jiang2018oblique}. It replaces two structural conditions and  weakens the assumption $A=o(|p|^2)$ to $A=O(|p|^2)$  used in the general augmented Hessian theory. Two structural conditions are the strict regularity of $A$ in the gradient variables and uniform $(\Gamma,A,G)$-convex with respect to $u$ of the boundary. More details, we exploit it at three points: to absorb the $A_{pp}$ terms in the interior $C^2$ estimate, to construct distance function barriers without a sign condition on the boundary curvature, and to control
the Hessian terms in the global gradient estimate.

\vspace{0.4cm}

After we get the results Theorem 1.2 and Theorem 1.3, to complete the proof of the existence theorem, it remains to establish the necessary solution estimates. As in \cite{jiang2018oblique}, we will assume the existence of subsolutions and supersolutions to compensate for the lack of solution estimates. For this aim, we define that the functions $\underline{u}$ and $\overline{u}$, in $C^2(\Omega)\cap C^1(\Omega)$, are a subsolution and a supersolution of the boundary value problem $(1.1)$, if they are admissible and satisfy
\begin{equation}
    \tag{1.16}
     \left\{
    \begin{aligned}
&\mathcal{F}[\underline{u}]\ge B(x,\underline{u},D\underline{u}),\,  \mathrm{in} \ \Omega,\\
    &\mathcal{G}[\underline{u}]\ge0,\,  \mathrm{on} \ \partial\Omega.    
\end{aligned}
\right.
\end{equation}
and
\begin{equation}
    \tag{1.17}
    \left\{
    \begin{aligned}
&\mathcal{F}[\overline{u}]\le B(x,\overline{u},D\overline{u}),\ \ \mathrm{in}\ \ \Omega,\\
    &\mathcal{G}[\overline{u}]\le0,\ \ \mathrm{on} \ \partial\Omega.     
\end{aligned}
\right.
\end{equation}
at every point. 

With the above results, we state the existence result for the classical admissible solutions of the oblique boundary value problems for $(n−1)$-type augmented Hessian equations (1.1).
\vspace{0.4cm}
\begin{theorem}
Assume that $F$ satisfies conditions $F1-F4$ with $b=\infty$ in the cone $\Gamma\subset\mathcal{P}_{n-1}$, $\Omega$ is a bounded $C^{3,1}$ domain in $\mathbb{R}^n$, $A\in C^2(\overline{\Omega}\times \mathbb{R}\times \mathbb{R}^n)$, $B\in C^2(\overline{\Omega}\times \mathbb{R}\times \mathbb{R}^n)$ satisfies $B>0$, $\mathcal{G}$ is oblique and semilinear, with $\beta\in C^{2,1}(\partial\Omega),\varphi\in C^{2,1}(\partial\Omega)$, $\underline{u}$ and $\overline{u},\in C^2(\Omega)\cap C^1(\Omega)$ are an admissible subsoltion and a supersolution for the oblique boundary value problem $\mathrm{(1.1)}$. Assume also that $A,B$ and $\varphi$ are non-decreasing in $\mathrm{z}$, with at least one of them strictly increasing, and that $A$ and $B$ satisfy the growth conditions $\mathrm{(1.14)}$. Assume that either $F4^+$ holds or $B$ is independent of $p$ and $p\cdot A_p\le O(|p|^2)I, \ \ p\cdot B_p\le O(|p|^2), A=O(|p|^2)$ in $\mathrm{(1.13)}$. Then there exists a unique admissible soultion $u\in C^{3,\alpha}(\Omega)$ for the oblique boundary value problem $\mathrm{(1.1)}$, for any $\alpha\in(0,1)$.
\end{theorem}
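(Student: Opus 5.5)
The plan is the standard method of continuity combined with Schauder fixed-point (Leray--Schauder degree) arguments, as in Jiang--Trudinger \cite{jiang2018oblique}. The a priori estimates come from Theorems 1.2 and 1.3. The solution bound comes from the sub- and supersolutions through a comparison principle.

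\emph{Step 1: uniqueness and the $C^0$ bound.} If $u,v$ are admissible solutions, then $w=u-v$ satisfies a linear equation $L w=a^{ij}D_{ij}w+b^iD_iw+cw=0$. Here $a^{ij}=\int_0^1\widetilde F^{ij}\,dt$ is uniformly elliptic by (1.9). Monotonicity of $A,B$ in $z$, together with F1, gives $c\le0$. On $\partial\Omega$ we have $\beta\cdot Dw-(\varphi(x,u)-\varphi(x,v))=0$, and $\varphi$ is nondecreasing in $z$. Strict monotonicity of at least one of $A,B,\varphi$ lets the strong maximum principle and Hopf lemma (obliqueness) force $w\equiv0$. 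The same argument with inequalities gives $\underline u\le u\le\overline u$ for any admissible solution, hence $\|u\|_{C^0}\le C$. It also gives the same bound along a deformation family, provided the family preserves the sub/supersolution inequalities.

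\emph{Step 2: higher estimates.} Given the $C^0$ bound, Theorem 1.3 yields $\sup|Du|\le C$ under the stated growth conditions $p\cdot A_p\le O(|p|^2)I$, $p\cdot B_p\le O(|p|^2)$, $A=O(|p|^2)$, with $B>0$ giving $b_0>0$ on compact ranges. Theorem 1.2 then gives $\sup|D^2u|\le C$, in either case ($F4^+$, or $B$ independent of $p$). The equation is then uniformly elliptic with bounded range of $D^2u$. By F2 and $B>a_0$ it stays in a compact subset of $\Gamma$, using F3 to keep away from $\partial\Gamma$. The Lieberman--Trudinger $C^{2,\alpha}$ estimate for concave uniformly elliptic equations with oblique boundary conditions \cite{lieberman1986nonlinear} then gives $\|u\|_{C^{2,\alpha}(\overline\Omega)}\le C$. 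Schauder theory bootstraps this to $C^{3,\alpha}$, using the $C^{3,1}$ domain and the $C^{2,1}$ data.

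\emph{Step 3: continuity/degree argument.} I would consider the family
$F[\gamma\Delta uI-D^2u-A]=tB+(1-t)\mathcal F[\underline u]$ in $\Omega$, $\beta\cdot Du=t\varphi(x,u)+(1-t)(\beta\cdot D\underline u-\varphi(x,\underline u)+\varphi(x,u))$ on $\partial\Omega$. One may add a term $\epsilon(u-\underline u)$ to secure strict monotonicity in $z$ if necessary. At $t=0$, $\underline u$ itself is a solution, and $\underline u,\overline u$ remain sub/supersolutions for every $t$. The estimates of Steps 1--2 are uniform in $t$. The linearized operator is invertible by the maximum-principle argument of Step 1, so the implicit function theorem gives openness and the estimates give closedness. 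Alternatively, I would apply Leray--Schauder degree on the set of admissible functions $\{\underline u\le u\le \overline u,\ \|u\|_{C^{2,\alpha}}<C\}$.

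\emph{Main obstacle.} The main obstacle is keeping admissibility and the sub/supersolution structure along the path. This includes ensuring $B_t>a_0$ and that $\gamma\Delta u I-D^2u-A$ stays in $\Gamma$ uniformly. Here $\mathcal F[\underline u]>a_0$ comes from admissibility of $\underline u$, and convexity of $(a_0,\infty)$ handles the interpolation. A second point is that when $\underline u,\overline u$ are only sub/supersolutions (not strict), the comparison principle needs the strict monotonicity hypothesis, which I will use exactly as in Step 1.
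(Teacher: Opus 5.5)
Your overall plan matches the paper's: the comparison principle gives uniqueness and $\underline u\le u\le\overline u$, then Theorems 1.3 and 1.2, then Evans--Krylov or Lieberman--Trudinger, then Schauder and continuity. However, the homotopy in Step 3 does not have the property you claim. The boundary part is fine, since $\mathcal G_t[u]=\mathcal G[u]-(1-t)\mathcal G[\underline u]$ gives $\mathcal G_t[\underline u]=t\,\mathcal G[\underline u]\ge0$ and $\mathcal G_t[\overline u]\le0$. The interior part gives
\[
F[M[\overline u]]-tB(x,\overline u,D\overline u)-(1-t)\mathcal F[\underline u]
=\bigl(F[M[\overline u]]-B(x,\overline u,D\overline u)\bigr)+(1-t)\bigl(B(x,\overline u,D\overline u)-\mathcal F[\underline u]\bigr).
\]
The last bracket has no sign, because the hypotheses never compare $\mathcal F[\overline u]$ with $\mathcal F[\underline u]$. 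Consider a point where both $\underline u$ and $\overline u$ satisfy the equation with equality, $\overline u>\underline u$, $D\overline u=D\underline u$, and $B$ is strictly increasing in $z$. There the right-hand side is strictly positive for every $t<1$. So $\overline u$ is not a supersolution of the deformed problems. You therefore have no upper bound on $u_t$ that is uniform in $t$. Since the constants in Theorems 1.2 and 1.3 depend on $\|u\|_{C^0}$, closedness of the continuity set is not established. The same family causes the $t=0$ problem you flag: if $B$ is the only strictly monotone datum, it drops out at $t=0$. Then all $\underline u+\mathrm{const}$ can be solutions and the linearization is not invertible. Your $\epsilon(u-\underline u)$ patch would in turn need $\epsilon$-independent $C^0$ bounds, which runs into the same barrier problem.

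A family that avoids both problems shifts the interior equation exactly as you already shifted the boundary condition:
\[
F[M[u]]=B(x,u,Du)+(1-t)\psi,\qquad \psi:=\mathcal F[\underline u]-B(\cdot,\underline u,D\underline u)\ge0,
\]
together with $\mathcal G[u]=(1-t)\mathcal G[\underline u]$. With this family:
\begin{itemize}
\item $\underline u$ solves the problem at $t=0$.
\item $\underline u$ and $\overline u$ remain a sub- and supersolution for every $t$.
\item The $z$-monotonicity, including the strict part, and the growth conditions are unchanged.
\item The right-hand side stays $\ge B>0$.
\end{itemize}
What remains, and what you do not address, is regularity; the paper handles this by ``approximation as in \cite{jiang2018oblique}''. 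With $\underline u$ only in $C^2(\Omega)\cap C^1(\overline\Omega)$, both $\psi$ and $\mathcal G[\underline u]$ are merely continuous. Theorems 1.1--1.3 need the interior data in $C^2$. The continuity and Schauder step needs boundary data in $C^{2,1}(\partial\Omega)$ and a starting solution in $C^{2,\alpha}(\overline\Omega)$. You must therefore first replace $\underline u$ by smooth approximations and then pass to the limit using the uniform estimates.
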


The existence result can be proved by the method of continuity, and the uniqueness follows directly by applying our monotonicity conditions on $A,B$ and $G$ to the comparison principle. It is worth mentioning that the higher regularity assumptions about $\beta$ and $\varphi$ are necessary in the application of the continuity method.

\vspace{0.4cm}
\noindent \textbf{Remark 1.2} The problem (1.1) in this paper is a special variant of problem (1.1)-(1.2) in \cite{lieberman1986nonlinear}. Lieberman and Trudinger considered a more general class of uniformly elliptic equations and under fairly stringent assumptions, established the existence and uniqueness of the solution. The main difference is that our equation is nonhomogeneou, more precisely: the right-hand side is a function $B(x,u,Du)$ that may be highly complicated. Moreover we require fewer growth conditions than those in \cite{lieberman1986nonlinear}, indeed, the growth condition we impose is, in certain respects, weaker than theirs. The comparison is as follows.

For clarity, let $F'_1$--$F'_5$ denote the assumptions $F_1$--$F_5$ 
of \cite{lieberman1986nonlinear}. The comparison is as follows.

\begin{itemize}
  \item $F'_1$: the function $\mu(|z|)$ reduces to the constant 
        $\gamma/(\gamma-1)$ in our setting.
  \item $F'_5$: coincides with assumption $F_2$ in this paper.
  \item $F'_2$ and $F'_4$: not required here, these are growth assumptions 
        on the operator $F$ and its second-order derivatives.
  \item $F'_3$: the conditions $p\cdot A_p \le O(|p|^2)I$ and 
        $p\cdot B_p \le O(|p|^2)$ coincide with $F'_3$, whereas 
        $D_xA, D_zA = o(|p|^3)$ and $D_pA = o(|p|^2)$ are slightly stronger than 
        $F'_3$ when the term $|r|$ is absent from the right-hand side of $F'_3$. 
        In addition, we impose extra but admissible conditions such as $D_zA \ge o(|p|^2)I$, $A = O(|p|^2)$, and certain conditions on $B$ stated in (1.13).
\end{itemize}

\vspace{0.4cm}
\noindent \textbf{Remark 1.3} For this problem, the cases $\gamma>1$ and $\gamma\ge1$ are quite different. Clearly, $\gamma\ge1$ covers more situations. However, when $\gamma=1$,the equation is only elliptic and cannot be shown to be strictly elliptic, which greatly increases the difficulty of the proof. It is mentioned in \cite{xiang2026oblique}, for degenerate Hessian quotient type equations 
\begin{equation}
\notag
\frac{\sigma_k}{\sigma_l}(\gamma\Delta I-D^2u)=f(x,u),
\end{equation}
when $0\le l<k<n$, the existence and uniqueness of solutions for oblique boundary value problems can also be obtained in the case $\gamma=1$. But when $0\le l<k\le n$, the existence and uniqueness of solutions can only be obtained under the condition $\gamma>1$. In order not to introduce additional boundary convexity conditions and regularity conditions on the functions in the equation, and to remain consistent with the form of the equations in \cite{li2011some,guan2019fully}, we choose to study the case $\gamma>1$.
\vspace{0.4cm}

The paper is organised as follows. In Sect. 2 we collect preliminary notation and identities, which include Lemma 2.1 and some necessary calculations for the proofs. In Sect. 3, we first prove the interior second derivative estimate, Theorem 1.1. Then, through classifying the second-order derivatives into three categories, which are mixed tangential-oblique direction($D_{\tau\beta}u$), double oblique direction($D_{\beta\beta}u$) and double tangential direction($D_{\tau\tau}u$), we get the Lemmas 3.1 and 3.2. Naturally we get the estimate of the second derivative on boundary and prove the Theorem 1.2. In Sect. 4, we prove the global gradient estimate, Theorem 1.3, under some more general growth assumptions on $A$ and $B$. Finally, In Sect. 5, we prove the existence theorem, Theorem 1.4 for semilinear oblique boundary value problems based on the priori derivative estimates.

\section{Preliminaries}
In this section, we collect preliminary notation and identities for the proofs in the following sections. The proofs in Section 3 can be greatly simplified by virtue of these preliminary works.

First of all, we introduce some notation for the admissible solutions of (1.1). We denote the $(n-1)$ type augmented Hessian $M[u]$ by $w=w_{ij}$, they are

    \begin{equation}
    \tag{2.1}
        w_{ij}=\gamma\delta_{ij}\Delta u-u_{ij}-A_{ij},\ \ M[u]=\gamma\Delta uI-D^2u-A. 
    \end{equation}
Corresponding, the first and second partial derivatives of $F$ at $M[u]$ by $F^{ij}$ and $ F^{ij,kl}$ are denoted as
\begin{equation}
\tag{2.2}
    F^{ij}=\frac{\partial F}{\partial r_{ij}}(M[u]), \ \ F^{ij,kl}=\frac{\partial^2 F}{\partial r_{ij}\partial r_{kl}}(M[u]).
\end{equation}
    
According to the Assumptions $F1$ and $F2$, we know that the matrix $\{F^{ij}\}$ is positive definite and  
\begin{equation}
\notag
    F^{ij,kl}\eta_{ij}\eta_{kl\le0}
\end{equation}
holds for any admissible solution $u$ and all $\{\eta_{ij}\}\in \mathbb{S}^n$. And we also denote $\mathcal{F}=\mathrm{trace}(F_r)=\sum_i^nF^{ii}$ which is consistent with the above notation. 

Second, an important Lemma should be introduced 
\begin{lemma}
 Let $\vec{x}=(x_1,x_2,...x_n)\in \mathbb{R}^n$, $\gamma>1$, $\{F^{ij}\}_{n\times n}$ is a positive definite matrix, it holds that
\begin{equation}
    \tag{2.3}
    F^{ij}\gamma\delta_{ij}\sum_mx_m^2-F^{ij}x_ix_j\ge(\gamma-1)\mathcal{F}|\vec{x}|^2\ge(\gamma-1)F^{ij}x_ix_j\ge0.
\end{equation}   
\end{lemma}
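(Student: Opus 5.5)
The plan is to prove the three inequalities in (2.3) one at a time, using only the positive definiteness of $\{F^{ij}\}$ and the spectral decomposition of this symmetric matrix. Throughout, summation over repeated indices is understood and $\mathcal{F}=\sum_i F^{ii}$ denotes the trace, as in the paper.

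\emph{First inequality.} The first term on the left of (2.3) simplifies to
\begin{equation}
\notag
F^{ij}\gamma\delta_{ij}\sum_m x_m^2=\gamma\mathcal{F}|\vec{x}|^2 .
\end{equation}
So the claim reduces to $F^{ij}x_ix_j\le \mathcal{F}|\vec{x}|^2$. To see this, diagonalize $\{F^{ij}\}$ by an orthogonal matrix, with eigenvalues $\lambda_1,\dots,\lambda_n>0$. Then
\begin{equation}
\notag
F^{ij}x_ix_j\le\lambda_{\max}|\vec{x}|^2\le\Big(\sum_k\lambda_k\Big)|\vec{x}|^2=\mathcal{F}|\vec{x}|^2 .
\end{equation}
The second step uses that all eigenvalues are positive, so the largest one is at most their sum, and the sum equals the trace. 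Substituting back gives
\begin{equation}
\notag
\gamma\mathcal{F}|\vec{x}|^2-F^{ij}x_ix_j\ge(\gamma-1)\mathcal{F}|\vec{x}|^2 .
\end{equation}

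\emph{Second inequality.} Apply the same bound $F^{ij}x_ix_j\le\mathcal{F}|\vec{x}|^2$ again and multiply through by $\gamma-1>0$. This gives $(\gamma-1)\mathcal{F}|\vec{x}|^2\ge(\gamma-1)F^{ij}x_ix_j$.

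\emph{Third inequality.} This is immediate: $\gamma>1$, and $F^{ij}x_ix_j\ge0$ by positive definiteness.

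No step is a genuine obstacle. The only point needing care is the eigenvalue comparison $\lambda_{\max}\le\mathcal{F}$, which requires every eigenvalue to be nonnegative. This is exactly where the positive definiteness hypothesis (coming from F1) enters, and it is the same observation that underlies the uniform ellipticity estimate (1.9).
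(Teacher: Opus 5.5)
Your proof is correct and matches the paper's own argument: both rest on the single bound $F^{ij}x_ix_j\le\lambda_{\max}|\vec{x}|^2\le\mathcal{F}|\vec{x}|^2$, which comes from positive definiteness. Combined with $F^{ij}\gamma\delta_{ij}\sum_m x_m^2=\gamma\mathcal{F}|\vec{x}|^2$ and $\gamma>1$, this bound gives all three inequalities in (2.3).
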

\begin{proof}
    Since $\{F^{ij}\}_{n\times n}$ is a positive definite matrix, for any vector $\vec{x}=(x_1,x_2,...,x_n)$, it holds that 
    \begin{equation}
        \notag
        \vec{x}^T\{F^{ij}\}_{n\times n}\vec{x}=F^{ij}x_ix_j\le\lambda_{\max}|\vec{x}|^2\le\mathcal{F}|\vec{x}|^2.
    \end{equation}

So    
    \begin{equation}
        \notag
        \gamma F^{ii}|\vec{x}|^2-F^{ij}x_ix_j\ge(\gamma-1)\mathcal{F}|\vec{x}|^2\ge(\gamma-1)F^{ij}x_ix_j\ge0.
    \end{equation}
    
\end{proof}

Third, we define two linearized operators of the operator $\mathcal{F}$ and Eq.(1.1),
\begin{equation}
    \tag{2.4}
    \begin{split}
&Lv:=F^{ij}(\gamma\delta_{ij}\Delta v-v_{ij}-A_{ij}^{P_k}D_kv),\\
&\mathcal{L}v:=Lv-D_{P_k}B(x,u,Du)D_kv,
    \end{split}
    \end{equation}
for any functions $v\in C^2(\Omega)$, where $A_{ij}^{P_k}=D_{P_k}A(x,u,Du)$ and sometimes hereinafter referred to as $A_{ij}^k$. According to (1.9), the operators $L$ and $\mathcal{L}$ is uniformly elliptic.

Next, for convenience, we will as usual denote the partial derivatives of admissible functions $u$ on $\Omega$ as $u_i=D_{x_i}u,u_{ij}=D_{x_ix_j}u$. And for a vector $\tau$, we denote the directional derivatives along the vector $\tau$ are $u_\tau=D_\tau u=u_i\tau^i,u_{i\tau}=u_{ij}\tau^j,u_{\tau\tau}=u_{ij}\tau^i\tau^j$, etc. Differentiating equation (1.1) once for variable $x_m$, we obtain
\begin{equation}
    \tag{2.5}
   F^{ij}(\delta_{ij}\gamma\Delta u_m-u_{ijm}-\widetilde{D}_{x_m}A_{ij}-A_{ij}^{P_k}u_{km})=\widetilde{D}_{x_m}B+B^{P_k}u_{km},
\end{equation}
where the operator $\widetilde{D}_{x_m}:=D_{x_m}+u_mD_z$, So similarly we denote the operator $\widetilde{D}_{x}:=D_{x}+DuD_z$.

Then, Differentiating again for variable $x_n$, this yields
\begin{equation}
    \tag{2.6}
    \begin{split}
    &F^{ij,kl}(\delta_{ij}\gamma\Delta u_m-u_{ijm}-\widetilde{D}_{x_m}A_{ij}-A_{ij}^{P_k}u_{km})(\delta_{kl}\gamma\Delta u_n-u_{kln}-\widetilde{D}_{x_n}A_{kl}-A_{kl}^{P_o}u_{on})+\\
    &F^{ij}(\gamma\delta_{ij}\Delta u_{nm}-u_{ijnm}-\widetilde{D}_{x_n}\widetilde{D}_{x_m}A_{ij}-D_{P_l}\widetilde{D}_{x_m}A_{ij}u_{ln}-\widetilde{D}_{x_n}A_{ij}^ku_{km}-A_{ij}^{kl}u_{ln}u_{km}\\
&-A_{ij}^ku_{kmn})=\widetilde{D}_{x_n}\widetilde{D}_{x_m}B+D_{P_l}\widetilde{D}_{x_m}Bu_{ln}+\widetilde{D}_{x_n}B^{P_k}u_{km}+B^{P_kP_l}u_{ln}u_{km}+B^{P_k}u_{kmn},
    \end{split}
\end{equation}
where the $A_{ij}^{kl}(x,u,Du)$ means $D_{p_kp_l}A_{ij}$. 

Using the (2.5) and (2.6), we can simplify the formulas $\mathcal{L}u_m$ and $\mathcal{L}u_{nm}$. Immediately, we will get 
\begin{equation}
    \tag{2.7}
    \mathcal{L}u_m= F^{ij}(\gamma\delta_{ij}\Delta u_m-u_{ijm}-A_{ij}^{P_k}u_{km})-D_{P_k}B(x,u,Du)u_{km}=F^{ij}\widetilde{D}_{x_m}A_{ij}+\widetilde{D}_{x_m}B,
\end{equation}
and
\begin{equation}
    \tag{2.8}
    \begin{split}
          \mathcal{L}u_{nm}= &-F^{ij,kl}(w_{kl})_n(w_{ij})_m+F^{ij}(\widetilde{D}_{x_n}\widetilde{D}_{x_m}A_{ij}+D_{P_l}\widetilde{D}_{x_m}A_{ij}u_{ln}+\widetilde{D}_{x_n}A_{ij}^ku_{km}+\\
&A_{ij}^{kl}u_{ln}u_{km})+\widetilde{D}_{x_n}\widetilde{D}_{x_m}B+\widetilde{D}_{x_m}B^{P_l}u_{ln}+\widetilde{D}_{x_n}B^{P_k}u_{km}+B^{P_kP_l}u_{ln}u_{km},
    \end{split}
\end{equation}
where the $B^{P_k}$ and $B^{P_kP_l}$ denote $D_{p_k}B$ and $D_{p_kp_l}B$ and sometimes hereinafter referred to as $B^k$ and $B^{kl}$.

Last but not least, For any function $g(x,z,p)\in C^2(\overline{\Omega}\times \mathbb{R}\times  \mathbb{R}^n)$, we analyze the expression for $Lg(x,u,Du)$, where $L$ is the operator defined in (2.4). We calculate the derivatives of $g(x,u,Du)$.

\begin{equation}
    \tag{2.9}
    \begin{split}
    g_i=&D_{x_i}g+D_zgu_i+D_{p_k}gu_{ki},\\
    g_{ij}=&D_{x_i}D_{x_j}g+D_{x_i}D_{z}gu_j+D_{x_i}D_{p_l}gu_{lj}+D_{x_j}D_zgu_i+D_zD_zgu_iu_j+D_{pl}D_zgu_iu_{lj}+\\
    &D_zgu_{ij}+D_{x_j}D_{p_k}gu_{ki}+D_zD_{p_k}gu_ju_{ki}+D_{p_l}D_{p_k}gu_{ki}u_{lj}+D_{p_k}gu_{kij},\\
    g_{ii}=&D_{x_i}D_{x_i}g+2D_{x_i}D_{z}gu_i+2D_{x_i}D_{p_k}gu_{ki}+D_zD_zgu_i^2+2D_{pk}D_zgu_iu_{ki}+D_zgu_{ii}+\\
    &D_{p_l}D_{p_k}gu_{ki}u_{li}+D_{p_k}gu_{kii},\\
    \Delta g= &\Delta_xg+2\sum_iD_{x_i}D_{z}gu_i+2\sum_iD_{x_i}D_{p_k}gu_{ki}+D_zD_zg\sum_iu_i^2+2\sum_iD_{pk}D_zgu_iu_{ki}+\\
    &D_zg\Delta u+\sum_iD_{p_l}D_{p_k}gu_{ki}u_{li}+D_{p_k}g\Delta u_{k},
    \end{split}
\end{equation}
where $\Delta_xg=\sum_iD_{x_i}D_{x_i}g$. Next we use (2.9) to calculate $Lg$. It has
\begin{equation}
\tag{2.10}
\begin{split}
        Lg=&D_zgLu+D_{p_k}gLu_k+F^{ij}[\gamma\delta_{ij}\sum_mD_{p_l}D_{p_k}gu_{km}u_{lm}-D_{p_l}D_{p_k}gu_{ki}u_{lj}]+\\
        &F^{ij}[\gamma\delta_{ij}\Delta_xg-D_{x_i}D_{x_j}g-A_{ij}^{p_k}D_{x_k}g]+F^{ij}[\gamma\delta_{ij}2\sum_m\widetilde{D}_{x_m}D_{p_k}gu_{km}-2\widetilde{D}_{x_i}D_{p_k}gu_{jk}]+\\
        &F^{ij}[\gamma\delta_{ij}2\sum_m\widetilde{D}_{x_m}D_zgu_m-\gamma\delta_{ij}D_zD_zg\sum_mu_m^2-2\widetilde{D}_{x_i}D_zgu_j+D_zD_zgu_iu_j],
\end{split}
\end{equation}
where 
\begin{equation}
    \tag{2.11}
    \begin{split}
    &Lu=F^{ij}(\gamma\delta_{ij}\Delta u-\delta_{ik}u_{jk}-A_{ij}^{p_k}u_k),\\
   &Lu_k=F^{ij}\widetilde{D}_{x_k}A_{ij}+\widetilde{D}_{x_k}B+D_{p_l}Bu_{kl}.
    \end{split}
\end{equation}

Assume that $g$ is quasilinear, then we have $D_{p_i}D_{p_j}g=0$, for $ \forall 1\le i,j\le n$. And we define
\begin{equation}
\notag
\widetilde{\beta}_{ik}:=2\widetilde{D}_{x_i}D_{p_k}g+D_zg\delta_{ik}.
\end{equation}
Plugging (2.11) into (2.10) and considering (1.8) , we obtain
\begin{equation}
    \tag{2.12}
    \begin{split}
    Lg=&D_zg(\widetilde{F}^{ij}\delta_{ik}u_{jk}-F^{ij}A_{ij}^{p_k}u_k)+D_{p_k}g(F^{ij}\widetilde{D}_{x_k}A_{ij}+\widetilde{D}_{x_k}B+D_{p_l}Bu_{kl})+\\
    &F^{ij}[\gamma\delta_{ij}\Delta_xg-D_{x_i}D_{x_j}g-A_{ij}^{p_k}D_{x_k}g]+2\widetilde{F}^{ij}\widetilde{D}_{x_i}D_{p_k}gu_{jk}+\\
    &F^{ij}[\gamma\delta_{ij}2\sum_m\widetilde{D}_{x_m}D_zgu_m-\gamma\delta_{ij}D_zD_zg\sum_mu_m^2-2\widetilde{D}_{x_i}D_zgu_j+D_zD_zgu_iu_j].
    \end{split}
\end{equation}
Using assumption $F4$, then it follows that a inequality about $Lg$, which is

\begin{equation}
    \tag{2.13}
    Lg\ge  -C\mathcal{F}+\widetilde{F}^{ij}\widetilde{\beta}_{ik}u_{jk}+D_{p_k}gD_{p_l}Bu_{kl},
\end{equation}
where $C$ is a constant depending on $n,\Omega,||A||_{C^1},||B||_{C^1}$ and $||u||_{C^1;\Omega}$. 

There is relation about $\mathrm{trace}(\widetilde{F}^{ij})$ and $\mathcal{F}$, which is
\begin{equation}
    \tag{2.14}
    \mathrm{trace}(\widetilde{F}^{ij})=(n\gamma-1)\mathcal{F}.
\end{equation}
(2.14) will be used several times in our article. 

All the preparatory work is now complete. Next, we will begin the proof chapter.

\section{Second derivative Estimates}

The proof of the second derivative estimates will be divided into five parts. The first part deals with the second derivative estimate for admissible solutions in the interior of the domain, while the second through fourth parts concern the second derivative estimate for admissible solutions on the boundary of the domain. The last part is the proof of Theorem 1.2.

\subsection{Interior second derivative estimate}
\begin{proof}

 Similarly to (2.7) and (2.8), we can simplify the formulas $\mathcal{L}u_{\tau},\mathcal{L}u_{\tau\tau}$ by differentiating  $F(\gamma\Delta uI-D^2u-A(x,u,Du))=B (x,u,Du)$ along the vector $\tau$ once and twice. A direct computation yields
\begin{equation}
    \tag{3.1}
    \begin{split}
\mathcal{L}u_\tau= &F^{ij}\widetilde{D}_{x_\tau}A_{ij}+\widetilde{D}_{x_\tau}B,\\
 \mathcal{L}u_{\tau\tau}=&-F^{ij，kl}(w_{kl})_\tau(w_{ij})_\tau+F^{ij}(\tau^m\tau^n\widetilde{D}_{x_n}\widetilde{D}_{x_m}A_{ij}+2\widetilde{D}_{x_\tau}A_{ij}^ku_{k\tau}+A_{ij}^{kl}u_{l\tau}u_{k\tau})+\\
&\tau^m\tau^n\widetilde{D}_{x_n}\widetilde{D}_{x_m}B+2\widetilde{D}_{x_\tau}B^{P_k}u_{k\tau}+B^{P_kP_l}u_{l\tau}u_{k\tau}.
    \end{split}
\end{equation}

Now we can see, the main difficulty in (3.1) is the term $F^{ij}A_{ij}^{kl}u_{l\tau}u_{k\tau}$. First we estimate it as
\begin{equation}
    \tag{3.2}
   F^{ij}A_{ij}^{kl}u_{\tau l}u_{\tau k}\ge-CF^{ij}|u_{\tau l}||u_{\tau k}|\ge -C\mathcal{F}|u_{\tau l}||u_{\tau k}|\ge-C\mathcal{F}\sum_mu_{\tau m}^2 ,
\end{equation}
where the constant $C$ depends on $n,||A||_{C^2}$ and $||u||_{C^1;\Omega}$. We can use $L|u_\tau|^2$ or $\mathcal{L}|u_\tau|^2$ to overcome this difficulty.

We first calculate $L|u_\tau|^2$.
\begin{equation}
    \tag{3.3}
    L|u_\tau|^2=F^{ij}(\gamma\delta_{ij}\Delta (|u_\tau|^2)-(|u_{\tau}|^2)_{ij}-A^{P_k}D_k(|u_\tau|^2)).
\end{equation}
It is a bit involved to calculate $\Delta (|u_\tau|^2)$. We write it out in detail.
\begin{equation}
    \tag{3.4}
    \begin{split}
        &(|u_\tau|^2)_i=2u_\tau u_{ki}\tau^k,\\
        &(|u_\tau|^2)_{ij}=2u_{\tau i}u_{\tau j}+2u_{\tau}u_{kij}\tau^k,\\
        &(|u_\tau|^2)_{ii}=2u_{\tau i}^2+2u_{\tau}u_{ii\tau},\\
        \Delta &(|u_\tau|^2)=\sum_i2u_{\tau i}^2+2u_\tau(\Delta u)_\tau.
    \end{split}
\end{equation}
Then one can get the formulas of $L|u_\tau|^2$ and $\mathcal{L}|u_\tau|^2$,
\begin{equation}
    \tag{3.5}
    \begin{split}
        \frac{c}{2}L|u_\tau|^2=&cF^{ij}[\gamma\delta_{ij}(\sum_mu_{\tau m}^2+u_\tau(\Delta u)_\tau)-u_{\tau i}u_{\tau j}-u_{\tau}u_{kij}\tau^k-A_{ij}^ku_\tau u_{\tau k}]\\
        &=cF^{ij}[\gamma\delta_{ij}\sum_mu_{\tau m}^2-u_{\tau i}u_{\tau j}]+cu_\tau Lu_\tau,\\
        \frac{c}{2}\mathcal{L}|u_\tau|^2=&cF^{ij}[\gamma\delta_{ij}\sum_mu_{\tau m}^2+u_\tau(\Delta u)_\tau)-u_{\tau i}u_{\tau j}-u_{\tau}u_{kij}\tau^k-A_{ij}^ku_\tau u_{\tau k}]-cB^ku_\tau u_{\tau k}\\=
        &cF^{ij}[\gamma\delta_{ij}\sum_mu_{\tau m}^2-u_{\tau i}u_{\tau j}]+cu_\tau\mathcal{L}u_\tau,
    \end{split}
\end{equation}

Using Lemma 2.1 to scale $ \frac{c}{2}\mathcal{L}|u_\tau|^2$. Choosing the constant $c$ large enough, it can show
\begin{equation}
    \tag{3.6}
    \frac{c}{2}\mathcal{L}|u_\tau|^2 \ge c(\gamma-1)\mathcal{F}\sum_mu_{\tau m}^2+cu_\tau\mathcal{L}u_{\tau}\ge 2C\mathcal{F}\sum_mu_{\tau m}^2+cu_\tau\mathcal{L}u_{\tau},
\end{equation}
where the constant $c$ only depends on $n,||A||_{C^2},\gamma$ and $||u||_{C^1;\Omega}$. That means we can choose a proper constant $c$ to achieve our goal to eliminate $F^{ij}A_{ij}^{kl}u_{l\tau}u_{k\tau}$ in (3.1). Add (3.1) and (3.5) together, Using the assumption $F2$ this yields
\begin{equation}
    \tag{3.7}
    \begin{split}
    \mathcal{L}(u_{\tau\tau}+\frac{c}{2}|u_\tau|^2)\ge& F^{ij}(\tau^m\tau^n\widetilde{D}_{x_n}\widetilde{D}_{x_m}A_{ij}+2\widetilde{D}_{x_\tau}A_{ij}^ku_{k\tau})+C\mathcal{F}\sum_mu_{\tau m}^2+cu_\tau\mathcal{L}u_{\tau}\\
    &\tau^m\tau^n\widetilde{D}_{x_n}\widetilde{D}_{x_m}B+2\widetilde{D}_{x_\tau}B^{P_k}u_{k\tau}+B^{P_kP_l}u_{l\tau}u_{k\tau}\\
   &\ge C\mathcal{F}|Du_\tau|^2-C(1+\mathcal{F})(1+|Du_\tau|)+B^{P_kP_l}u_{l\tau}u_{k\tau}\\
   &\ge C\mathcal{F}|Du_\tau|^2-C(1+\mathcal{F})(1+|Du_\tau|)+\lambda_B|Du_\tau|^2,
\end{split}
\end{equation}
where the constant $C$ depends on $n,||A||_{C^2;\Omega},||B||_{C^2;\Omega},\gamma$ and $||u||_{C^1;\Omega}$, $\lambda_B$ is the minimum eigenvalue of matrix $D^2_pB$. If the assumption (ii) (B is convex with respect to variable $p$) in Theorem 1.1 holds, this yields 
\begin{equation}
    \tag{3.8}
     \mathcal{L}(u_{\tau\tau}+\frac{c}{2}|u_\tau|^2)\ge C\mathcal{F}|Du_\tau|^2-C(1+\mathcal{F})(1+|Du_\tau|).
\end{equation}

For the assumption (i) ($F4^+$ holds) in Theorem 1.1. Since $M[u]=\gamma\Delta uI-D^2u-A$ is linearly invertible and $A$ is bounded, if $|D^2u|$ is large enough, it can be shown that $|M[u]|$ is large enough. So we can use assumption $F4^+$ to get $C\mathcal{F}+\lambda_B>\frac{C\mathcal{F}}{2}$ when $|Du_\tau|^2$ is large enough. (3.8) can also be derived under the assumption i.

Finally, we construct the classical auxiliary function $e^{ax_1}$, where $a\in \mathbb{R}$ is constant. Add it into $u_{\tau\tau}+\frac{c}{2}|u_\tau|^2$ and applying $\mathcal{L}$, using Young's Inequality, we obtain
\begin{equation}
    \tag{3.9}
    \begin{split}
          \mathcal{L}(u_{\tau\tau}-\frac{C_1}{2}|u_\tau|^2+e^{ax_1})&\ge[C\mathcal{F}-\epsilon(1+\mathcal{F})]|Du_\tau|^2-C(\epsilon)(1+\mathcal{F})+\\
          &e^{ax_1}[F^{11}(\gamma-1)a^2+\sum_{k=2}^nF^{kk}a^2-aF^{ij}A_{ij}^{p_1}-aB^{p_1}],
    \end{split}
\end{equation}
where $\epsilon$ is a small constant. Since $\Omega$ is boundeded, we can find a constant $R\in\mathbb{R}$ such that $x_1+R>0$ for any $x\in\Omega$, So using the assumption $F4$ or $F4^+$, Choosing $a$ large enough, it follows that
\begin{equation}
    \tag{3.10}
     \mathcal{L}(u_{\tau\tau}-\frac{C_1}{2}|u_\tau|^2+e^{a(x_1+R)})\ge[C\mathcal{F}-\epsilon(1+\mathcal{F})]|Du_\tau|^2+[-C(\epsilon)+a^2(\gamma-1)-aC]\mathcal{F}\ge0.
\end{equation}
The key point is to choose $\epsilon$ small enough to make sure $C\mathcal{F}-\epsilon(1+\mathcal{F})>0$ and let $a$ large enough to make sure $a^2(\gamma-1)-C(\epsilon)-aC>0$.

Then, by the classical maximum principle, under the hypotheses of Theorem1.1, we have
\begin{equation}
    \tag{3.11}
    \underset{\Omega}{\sup}\ u_{\tau\tau}\le\underset{\partial\Omega}{\sup}\ u_{\tau\tau}+C,
\end{equation}
where constant $C$ depends on the same quantities as in (3.7). This yields a global upper bound for $D^2u$, since $\tau$ can be any unit vector.

For the lower bounds, we shall use the assumption $\Gamma\subset\Gamma_1^+$. If $u$ is an admissible solution for (1.1), that means
\begin{equation}
    \tag{3.12}
     \gamma\Delta uI-D^2u-A\in\Gamma_1\to(n\gamma-1)\Delta u>\mathrm{tr}(A)\ge-C,
\end{equation}
where the constant $C$ depends on $n,\Gamma$ and $||A||_{C^0;\Omega}$. 

Since (3.11), we know that the second-order directional derivatives of the admissible solution $u$ in all directions are bounded above, by direct calculation it follows that 
\begin{equation}
\tag{3.13}
    \lambda_{\max}(D^2u)\le\underset{\partial\Omega}{\sup}\ u_{\tau\tau}+C,
\end{equation}
where $\lambda_{\max}(D^2u)$ means the largest eigenvalue of the Hessian matrix $D^2u$ and constant $C$ depends on the same quantities as in (3.11). Without loss of generality, let the smallest eigenvalue of the matrix $D^2u$ be $\lambda_1(D^2u)$, it can be shown that

\begin{equation}
    \tag{3.14}
    \lambda_{\min}(D^2u)=\Delta u-\sum_{i=2}^n\lambda_i(D^2u)\ge-(n-1)\underset{\partial\Omega}{\sup}\ u_{\tau\tau}-C,
\end{equation}
where $\lambda_i(D^2u)$ means the eigenvalue of the matrix $D^2u$ and constant $C$ depends on the same quantities as in (3.11) and additionally $\Gamma$. Combining (3.13) and (3.14), for any $D_{ij}u,1\le i,j\le n$, a direct calculation yields 
\begin{equation}
    \tag{3.15}
    |D_{ij}u|\le n\, \underset{\partial\Omega}{\sup}\ u_{\tau\tau}+C,
\end{equation}
where the constant $C$ depends on $n,A,B,\gamma,F,\Gamma,\Omega$ and $||u||_{C^1;\Omega}$.
\end{proof}

\subsection{Boundary second derivative estimate for mixed tangential-oblique direction}
To prove Theorem 1.2, we have to establish estimates for second derivatives on the boundary $\partial\Omega$ under the boundary condition (1.2). Before we begin the estimate, for convenience, we assume the functions $G(x,u,Du)$ and unit normal vector $\nu$ have been extended to $\overline{\Omega}$ properly. 

We divide it into three parts and estimate them separately. They are mixed tangential-oblique direction, double oblique direction and double tangential direction. First and the simplest part is mixed tangential-oblique direction. We differentiate the boundary condition function $G(x,u,Du)$ along the tangential direction $\tau$ at any point $x\in\partial\Omega$. Since $G\equiv0$ on $\partial\Omega$, so the derivative is also zero.
\begin{equation}
    \tag{3.16}
    \widetilde{D}_{x_\tau}G+G_{p_l}u_{lm}\tau_m=0, \,  \mathrm{on} \    \partial\Omega,
\end{equation}
which means 
\begin{equation}
    \tag{3.17}
    u_{\beta\tau}=-\widetilde{D}_{x_\tau}G\to \ \ |u_{\beta\tau}| \le C, \ \ \mathrm{on} \ \   \partial\Omega,
\end{equation}
for any unit tangential vector field $\tau$. Where $\beta=D_pG(x,u,Du)$ and $C$ is a constant depending on $G,\Omega$ and $||u||_{C^1;\Omega}$.
So we have the estimate for second derivative in tangential-$\beta$ direction on boundary.

\subsection{Boundary second derivative estimate for double oblique direction}
Before we start this part, let's give some notations. Setting
\begin{equation}
    \notag
    M_2:=\sup_\Omega|D^2u|, \ \ M_2':=\sup_{\partial\Omega}\sup_{|\tau=1|,\tau\cdot\nu=0}|u_{\tau\tau}|.
\end{equation}
Then we state the result for this part.

\begin{lemma}
Let $u\in C^3(\overline{\Omega})$ be an admissible solution of the boundary value problem $\mathrm{(1.1)}$ in a bounded $C^2$ domain $\Omega\subset \mathbb{R}^n$, without any convexity assumption for $\Omega$. Assume the operator $F$ satisfies conditions $F1-F4$, $A\in C^1(\overline{\Omega}\times \mathbb{R}\times \mathbb{R}^n)$, $B\in C^1(\overline{\Omega}\times \mathbb{R}\times \mathbb{R}^n)$ satisfies $B>a_0$ on the range of $(x,u,Du)$. and $G\in C^2(\partial\Omega\times \mathbb{R}\times \mathbb{R}^n)$ is oblique with respect to $u$ satisfying (1.2), either $F4^+$ holds or $B$ is independent of variable $p$ and $\mathcal{G}$ is quasilinear satisfying (1.11). Then for any $\epsilon>0$, 
\begin{equation}
    \tag{3.18}
    \sup_{\partial\Omega}u_{\beta\beta}\le\epsilon M_2+C(\epsilon)(1+M_2'),
\end{equation}
where $\beta=D_pG(x,u,Du)$ and constant $C(\epsilon)$ depends on $\epsilon,F,A,B,\gamma,G,\Omega,\beta_0$ and $||u||_{C^1;\Omega}$.
\end{lemma}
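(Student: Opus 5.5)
We follow the barrier argument of Jiang--Trudinger for the double oblique direction, simplified by the uniform ellipticity (1.9) and Lemma 2.1. Since $u_{\beta\beta}$ on $\partial\Omega$ is essentially the normal derivative of $G(x,u,Du)$, the idea is to bound that normal derivative from above by an upper barrier built from the distance function.

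\textbf{Reduction to a function vanishing on the boundary.} Extend $\beta$ and $\varphi$ smoothly to $\overline\Omega$ and set
\[
v:=G(x,u,Du)=\beta(x,u)\cdot Du-\varphi(x,u),
\]
so that $v=0$ on $\partial\Omega$. On $\partial\Omega$ we have
\[
D_\beta v=\beta_k\beta_l u_{kl}+\widetilde D_{x_\beta}G=u_{\beta\beta}+O(1).
\]
Hence it suffices to bound $D_\beta v$ from above at a boundary point $x_0$ where $u_{\beta\beta}$ attains its maximum over $\partial\Omega$. Because $\beta\cdot\nu\ge\beta_0>0$ and $v=0$ on $\partial\Omega$, we get $D_\beta v=(\beta\cdot\nu)D_\nu v$ there. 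So it is enough to find a function $\Phi\ge v$ near $x_0$, with $\Phi(x_0)=0$, whose normal derivative is controlled.

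\textbf{Differential inequality for $v$.} Apply (2.13) with $g=G$ quasilinear, so that $G_{pp}=0$. This gives
\[
\mathcal L v\ge -C\mathcal F+\widetilde F^{ij}\widetilde\beta_{ik}u_{jk}+(G_{p_k}B_{p_l}-B_{p_k}\beta_l)u_{kl}.
\]
If $B$ is independent of $p$, the last term vanishes. If $B$ depends on $p$, we use $F4^+$ (large $|D^2u|$ forces large $\mathcal F$) to absorb $|D^2u|$ into $\mathcal F M_2$. In either case
\[
\mathcal L v\ge -C\mathcal F(1+M_2).
\]
A sharper version uses Cauchy--Schwarz with the positive matrix $\widetilde F$:
\[
|\widetilde F^{ij}\widetilde\beta_{ik}u_{jk}|\le \epsilon\,\widetilde F^{ij}u_{ik}u_{jk}+C(\epsilon)\mathcal F,
\]
and the good term $\widetilde F^{ij}u_{ik}u_{jk}$ comes from $\mathcal L|Du|^2$, using (2.3).

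\textbf{Barrier.} Let $d$ be the distance to $\partial\Omega$, which is $C^2$ in a strip $\Omega_\delta$; no sign condition on its curvature is needed. Take
\[
\Phi=\frac{\epsilon}{2}M_2\,|x-x_0|^2\ \text{(or a tangential quadratic)}\;+\;K(1+M_2')\bigl(d-\tfrac{\mu}{2}d^2\bigr)\;+\;\text{corrections},
\]
modelled on Jiang--Trudinger. The key computation uses Lemma 2.1 and (1.8):
\[
L d=F^{ij}(\gamma\delta_{ij}\Delta d-d_{ij})-F^{ij}A^{k}_{ij}d_k\le C\mathcal F,
\]
while
\[
L(-d^2)=-2F^{ij}(\gamma\delta_{ij}|Dd|^2-d_id_j)-2d\,Ld\le -2(\gamma-1)\mathcal F+Cd\,\mathcal F.
\]
So the quadratic term in $d$ gives a uniformly negative $\mathcal L$ of size $\mathcal F$ regardless of the curvature of $\partial\Omega$; this is exactly where $\gamma>1$ replaces boundary convexity. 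Choosing $\mu$ large, then $\delta$ small and $K$ large, makes $\mathcal L(\Phi- v)\le 0$ in $\Omega_\delta$, after dividing by $\mathcal F$ (using $F4$, $\mathcal F\ge\delta_0$). Boundary comparison on $\partial\Omega_\delta$ is arranged as follows. On the inner boundary $\{d=\delta\}$ we use $|v|\le C(1+M_2)$, which is why $M_2$ enters through the $\epsilon$-term. On $\partial\Omega$ we use $v=0$, once the tangential second derivatives entering $v$ are controlled by $M_2'$ and by Section 3.2.

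\textbf{Conclusion and main obstacle.} The maximum principle gives $v\le\Phi$. Since $\Phi(x_0)=v(x_0)=0$, this yields $D_\nu v(x_0)\le D_\nu\Phi(x_0)$, and hence
\[
u_{\beta\beta}(x_0)\le \epsilon M_2+C(\epsilon)(1+M_2').
\]
The hard step is absorbing the $O(M_2)$ terms in $\mathcal L v$ with a barrier whose normal derivative is only $\epsilon M_2$. I would first try the standard device of letting $v$ carry a weight, $v'=v-\tfrac{\epsilon}{?}\ldots$, together with the good term $\widetilde F^{ij}u_{ik}u_{jk}$ from $\mathcal L|Du|^2$. The aim is to reduce $\mathcal L v\ge -C\mathcal F(1+M_2)$ to $\mathcal L v'\ge -C(\epsilon)\mathcal F(1+M_2')$ up to an $\epsilon M_2$ error, with (1.9) guaranteeing that $\widetilde F$ dominates $\mathcal F\cdot I$.
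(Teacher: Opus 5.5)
There is a genuine gap, at exactly the step you flag as the hard one. Your proposal never produces a function whose $\mathcal L$ is bounded below by something like $-C\mathcal F(1+\epsilon M_2)$. With only $\mathcal L v\ge -C\mathcal F(1+M_2)$, the coefficient of the $d$-barrier must be of order $M_2$, and you get the useless bound $u_{\beta\beta}\le C(1+M_2)$.

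The missing construction, which is the paper's, is
\[
\overline v=G(x,u,Du)+\frac{a}{2}\,|Du-Du(x_0)|^2,\qquad a=\frac{1}{1+\epsilon_1 M_2}\le 1 .
\]
Two features matter:
\begin{itemize}
\item The sign is $+$. Then $\mathcal L\overline v$ contains the good term $a\widetilde F^{ij}u_{ki}u_{kj}$. By Cauchy--Schwarz with the positive matrix $\widetilde F$, this absorbs $\widetilde F^{ij}\widetilde\beta_{ik}u_{jk}$ at cost $\frac{C}{a}\mathcal F$.
\item The gradient is centred at $x_0$. The added term vanishes to second order at $x_0$, so it changes neither $\overline v(x_0)=0$ nor $D_\beta\overline v(x_0)$. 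On $\partial\Omega$ it is bounded, via (3.17), obliqueness and polarization of $M_2'$, by $Ca(1+M_2')^2|x-x_0|^2$. This is dominated by adding $b|x-x_0|^2$ to the barrier, with $b=Ca(1+M_2')^2$.
\end{itemize}
The weight must depend on $M_2$. With a fixed $a$ you are left with $C/a+Ca(1+M_2')^2$, which is quadratic in $M_2'$; it is not of the form (3.18) and cannot be closed against Lemma 3.2 in Theorem 1.2. With $a=(1+\epsilon_1M_2)^{-1}$ you have $1/a=1+\epsilon_1 M_2$ and $a(M_2')^2\le M_2'/\epsilon_1$ (since $M_2'\le M_2$). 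This is exactly what yields $\epsilon M_2+C_\epsilon(1+M_2')$.

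Your barrier also places the $M_2$-dependence in the wrong term. The term $\frac{\epsilon}{2}M_2|x-x_0|^2$ has zero gradient at $x_0$ and positive $\mathcal L$, of size about $\epsilon M_2(n\gamma-1)\mathcal F$. So it absorbs nothing and only adds to what the $d$-part must dominate. Its legitimate role is to dominate $\overline v$ on $\partial\Omega$, with coefficient $b$ as above. The large coefficient belongs on $d-td^2$: take $\overline\phi=c(d-td^2)+b|x-x_0|^2$ with $c\ge C(b+1/a)+\cdots$, which gives $u_{\beta\beta}(x_0)\le c\,\beta\cdot\nu+C$.

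Your boundary bookkeeping is also off. On $\{d=\delta\}$ no $M_2$ appears, since $G$ depends only on $(x,u,Du)$ and $|\overline v|\le C$ there. Your bound $\frac{\epsilon}{2}M_2\delta^2$ could not dominate $C(1+M_2)$ anyway. On $\partial\Omega$, $G$ vanishes identically; tangential second derivatives enter only through the added gradient term.

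What you do have is sound and matches the paper: the reduction to $D_\beta G(x_0)$, and the distance barrier whose negativity comes from Lemma 2.1 with $\gamma>1$ rather than from boundary convexity. One further remark: your term $(G_{p_k}B_{p_l}-B_{p_k}\beta_l)u_{kl}$ is identically zero, because $G_p=\beta$ and $D^2u$ is symmetric. The same cancellation removes $a(u_k-u_k(x_0))B_{p_l}u_{kl}$ when you work with $\mathcal L$. Combined with $\mathcal F\ge\delta_0$ and a large $t$ to absorb $B_p\cdot D\overline\phi$, this would make $F4^+$ unnecessary in this lemma. That is a genuine simplification, but it does not replace the missing weighted-gradient construction.
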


\begin{proof}

First we construct an auxiliary function $f$, for any point $x_0\in\partial\Omega$, we define
\begin{equation}
    \tag{3.19}
    f(x)=\frac{a}{2}|Du-Du(x_0)|^2,
\end{equation}
where $a\le1$ is a positive constant. Then we consider the expression of $Lf$. First
\begin{equation}
    \tag{3.20}
    \begin{split}
    &f_i=a\sum_k(u_k-u_k(x_0))u_{ki},\\    
    &f_{ij}=a\sum_ku_{ki}u_{kj}+a\sum_k(u_k-u_k(x_0))u_{kij},\\
    &\Delta f=a\sum_{k,m}u_{km}^2+a\sum_k(u_k-u_k(x_0))\Delta u_{k}.
    \end{split}
    \end{equation}
So we have
\begin{equation}
    \tag{3.21}
    L(\frac{a}{2}|Du-Du(x_0)|^2)=aF^{ij}(\gamma\delta_{ij}u_{km}^2-u_{kj}u_{ki})+a(u_k-u_k(x_0))Lu_k=a\widetilde{F}^{ij}u_{ki}u_{kj}+a(u_k-u_k(x_0))Lu_k.
\end{equation}

Then we define the second auxiliary function $\overline{v}$, it is
\begin{equation}
    \tag{3.22}
    \overline{v}=G(x,u,Du)+\frac{a}{2}|Du-Du(x_0)|^2,
\end{equation}
where $G$ is the boundary function in (1.1).

 By (2.13), (2.14), (3.21), Young's inequality and Cauchy inequality, calculating $L\overline{v}$, we obtain 
 \begin{equation}
    \tag{3.23}
    \begin{split}
    L\overline{v}\ge&-C\mathcal{F}+\widetilde{F}^{ij}\widetilde{\beta}_{ik}u_{jk}+(D_{p_k}G)(D_{p_l}B)u_{kl}+a\widetilde{F}^{ij}u_{ki}u_{kj}+a(u_k-u_k(x_0))Lu_k.\\
    \ge&-C(1+a)\mathcal{F}-\frac{1}{a}\widetilde{F}^{ij}\widetilde{\beta}_{ik}\widetilde{\beta}_{jk}+[D_{p_k}G+a(u_k-u_k(x_0))](D_{p_l}B)u_{kl}\\
    \ge&-\mathcal{F}(\frac{C}{a}+\epsilon_1M_2+C_{\epsilon_1}),
    \end{split}
    \end{equation}
for any $\epsilon_1>0$, where $g$ is replaced by $G$ in $\widetilde{\beta}_{ik}$. $F4^+$ is used in the last inequality when $|D^2u|$ is large enough. The constant $C$ and $C_{\epsilon_1}$ depend on $n,\Omega,\gamma,||G||_{C^2},||A||_{C^1},||B||_{C^1},F,\epsilon_1$ and $||u||_{C^1}$.
In the estimation (3.23), The following relation is used
\begin{equation}
    \tag{3.24}
         |D^2u(x)|\le\epsilon_1\mathcal{F}(x)M_2+C_{\epsilon_1}\mathcal{F}(x) 
\end{equation}
for any $\epsilon_1>0$, it can be justified under conditions $F4$ and $F4^+$, by distinguishing the following two cases according to the size of the augmented Hessian $|M[u]|$.

(i) Bounded case: $|M[u]|\le C_0$. From $F4$ we know $\mathcal{F}(x)$ has positive lower bound and then it can proved directly. 

(ii) Large-Hessian case: $|M[u]|$ is sufficiently large that $\epsilon_1\mathcal{F}(x)>1$ from $F4^+$. In this case we have $|D^2u(x)|\le\epsilon_1\mathcal{F}(x)M_2$.

Next we construct an upper barrier for $\overline{v}$ at the point $x_0$. So we define the third auxiliary function $\phi$, 
\begin{equation}
    \tag{3.25}
    \phi=c(d-td^2),
\end{equation}
where $d=d(x)=\mathrm{dist}(x,\partial\Omega)$ is distance function. $c,t$ are positive constants to be determined. It is well known that, If the domain $\Omega$ is a $C^2$ domain, then exists a small constant $\rho>0$, such that the function $d(x)$ is $C^2$ in $\Omega_\rho:=\{x\in\Omega|d(x)<\rho\}$. In this way, we can apply the operator $L$,
as defined above, to $\phi$ on the domain $\Omega_\rho$. First 
\begin{equation}
\tag{3.26}
\begin{split}
    &\phi_i=c(1-2td)d_i \ \ \phi_{ij}=c(1-2td)d_{ij}-2tcd_id_j,\\
    &\Delta\phi=c(1-2td)\Delta d-2tc|Dd|^2.
\end{split}
\end{equation}
Then applying operator $L$, it follows that
\begin{equation}
\tag{3.27}
\begin{split}
    L\phi&=cF^{ij}[\gamma\delta_{ij}((1-2td)\Delta d-2t|Dd|^2)-(1-2td)d_{ij}+2td_id_j-A_{ij}^{p_k}(1-2td)d_k],\\
    \frac{1}{c(2td-1)} L\phi&=F^{ij}[\gamma\delta_{ij}(-\Delta d)+d_{ij}+A_{ij}^{p_k}d_k]+\frac{2t}{1-2td}F^{ij}(\gamma\delta_{ij}|Dd|^2-d_id_j).
\end{split}
\end{equation}

As we know, $|Dd|\equiv1$ on $\partial\Omega$. So we can let $\rho>0$ small enough such that $|Dd|>\frac{1}{2}$ on $\Omega_\rho$. Then we let $t$ not so large to satisfy $t\rho\le\frac{1}{4}$. Using Lemma 2.1, we can scale $\frac{1}{c(2td-1)} L\phi$ as  
\begin{equation}
    \tag{3.28}
    \frac{1}{c(2td-1)} L\phi\ge F^{ij}[\gamma\delta_{ij}(-\Delta d)+d_{ij}+A_{ij}^{p_k}d_k]+2t(\gamma-1)F^{ij}\delta_{ij}|Dd|^2\ge-C\mathcal{F}+t(\gamma-1)\mathcal{F}\ge2\mathcal{F}.
\end{equation}
Here, the constant $C$ depends on the curvature of the domain, but don't need to assume any convexity condition on the domain. To get the last inequality, we must let $t$ is large enough. In order to keep the restriction $t\rho\le\frac{1}{4}$, we must let $\rho>0$ small enough. Therefore we have
\begin{equation}
    \notag
     L\phi\le -c\mathcal{F}, \,  \mathrm{in}  \ \partial\Omega_\rho.
\end{equation}

Then the forth and the last auxiliary function is be defined as $\overline{\phi}$, 
\begin{equation}
    \tag{3.29}
    \overline{\phi}=\phi+b|x-x_0|^2,
\end{equation}
where $b$ is a constant to be determined later. Applying $L$   to $\overline{\phi}$, it can be shown
\begin{equation}
    \tag{3.30}
    L\overline{\phi}\le -c\mathcal{F}+Cb\mathcal{F}, \,  \mathrm{on} \  \Omega_\rho,
\end{equation}
where the constant $C$ depends on $n,||A||_{C^1},\Omega$ and $\gamma$. Provided $c\ge C(b+\frac{1}{a})+\epsilon_1 M_2+C_{\epsilon}$, we get
\begin{equation}
    \tag{3.31}
    L\overline{\phi}\le L\overline{v} \to L(\overline{v}-\overline{\phi})\ge0, \, \mathrm{in} \  \Omega_\rho. 
\end{equation}

Next, we need to examine $\overline{v}$ and $\overline{\phi}$ on the boundary of $\Omega_\rho$. 

For any $x\in\partial\Omega$, $\overline{v}=\frac{a}{2}|Du-Du(x_0)|^2$ and $\overline{\phi}=\frac{b}{2}|x-x_0|^2$, then 
\begin{equation}
    \tag{3.32}
    |Du-Du(x_0)|^2\le |C(1+M_2')|x-x_0||^2,
\end{equation}
where the strict obliqueness (1.2) and (3.17) are used, so the constant $C$ also depends on $\beta_0$.

For any $x\in\partial\Omega_\rho\cap\Omega$,  $\overline{v}=G+\frac{a}{2}|Du-Du(x_0)|^2\le C$ and  $\overline{\phi}=\phi+\frac{b}{2}|x-x_0|^2\ge\frac{3}{4}c\rho$, where the constant $C$ depends on $G$ and $||u||_{C^1}$. By choosing $b=aC[1+(M_2^{'})^2]$ and $c\ge \frac{4C}{3\rho}$, it follows that 
\begin{equation}
    \tag{3.33}
    \overline{v}\le\overline{\phi}, \,  \mathrm{in} \  \Omega_\rho.
\end{equation}

Since $ \overline{v}(x_0)=\overline{\phi}(x_0)$, we know
\begin{equation}
\tag{3.34}
    D_\beta\overline{v}(x_0)\le D_\beta\overline{\phi}(x_0),
\end{equation}
which implies
\begin{equation}
    \tag{3.35}
    u_{\beta\beta}(x_0)\le\beta^0c+C,
\end{equation}
where $\beta^0:=\underset{\partial\Omega}{\sup}(G_p(x,u,Du)\cdot\nu)\ge\beta_0$. Then for any $\epsilon>0$, we can take $a=\frac{1}{1+\epsilon_1M_2}$ and $\epsilon_1=\frac{\epsilon}{\beta^0C}$, it has  
\begin{equation}
    \tag{3.36}
    \begin{split}
    c\le& 2[C(b+\frac{1}{a})+\epsilon_1 M_2+C_{\epsilon}]+\frac{4C}{3\rho}\\
    \le& C[a(M_2')^2+(a+\epsilon_1)M_2+\frac{1}{a}]\\
    \le&C[\frac{1}{1+\epsilon_1M_2}(M_2')^2+(\epsilon_1+\frac{1}{1+\epsilon_1M_2})M_2+1+\epsilon_1M_2]\\
    \le& C[C_\epsilon (M_2')^2+\frac{\epsilon}{\beta^0C}M_2+C_\epsilon],
    \end{split}
\end{equation}
where constants $C$ and $C_\epsilon$ depends on $\gamma,F,A,B,G,\Omega,\beta_0,\epsilon$ and $||u||_{C^1;\Omega}$.

Combining (3.36) and (3.35), we obtain

\begin{equation}
    \tag{3.37}
     u_{\beta\beta}(x_0)\le \epsilon M_{2}+C_\epsilon (1+M_2').
\end{equation}
 Since $x_0$ is an arbitrary point on $\partial\Omega$, we can take the supremum of (3.37) on $\partial\Omega$ to get the estimate (3.18) in Lemma3.1. Therefore, the case $F4^+$ holds has been proved. While only $F4$ holds and $B$ independent of variable $p$, the last term in third line of (3.23) will not appear. So the same estimate (3.18) and Lemma 3.1 are thus proved in the same way.

 \end{proof}

\subsection{Boundary second derivative estimate for double tangential direction}
Now only purely tangential derivatives on the boundary need to be estimated. In this part, we formulate it as follows.

\begin{lemma}
    Let $u\in C^2(\overline{\Omega})\cap C^4(\Omega)$ be an admissible solution of the boundary value problem $\mathrm{(1.1)}$ in a bounded $C^2$ domain $\Omega\subset \mathbb{R}^n$. Assume that $F,A$ and $B$ satisfy the hypothesis of Theorem 1.1 and $G\in C^2(\partial\Omega\times \mathbb{R}\times\mathbb{R}^n)$ is quasilinear and oblique with respect to $u$ satisfying $\mathrm{(1.2)}$.Then for any tangential vector field $\tau$, $|\tau|\le1$ and constant $\epsilon>0$, we have estimate
    \begin{equation}
        \tag{3.38}
        M_2^+(\tau)\le\epsilon M_2+C_\epsilon,
    \end{equation}
where $M_2^+=\underset{\partial\Omega}{\sup}u_{\tau\tau}$, and constant $C_\epsilon$ depends on $\epsilon,\gamma,A,B,G,\Omega,\beta_0$ and $||u||_{C^1;\Omega}$. 
\end{lemma}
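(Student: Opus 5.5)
We follow the standard tangential second-derivative argument of Lieberman--Trudinger and Jiang--Trudinger, using the uniform ellipticity (1.9) and Lemma 2.1 in place of structural conditions on $A$ and $\partial\Omega$.

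First we extend the unit inner normal $\nu$ and the oblique field $\beta=G_p$ smoothly into a neighbourhood of $\partial\Omega$. For a unit tangential field $\tau$ we write $\tau=\tau'+\frac{\tau\cdot\nu}{\beta\cdot\nu}\beta$, where $\tau'$ is tangent to $\partial\Omega$; on $\partial\Omega$ the second term vanishes. Next we choose $x_0\in\partial\Omega$ and a unit tangent vector $\xi$ there at which the maximum
\[
M_2^+=\sup_{\partial\Omega}\sup_{|\tau|=1,\ \tau\cdot\nu=0}u_{\tau\tau}
\]
is attained. We set $\xi=e_1$ and extend it to a smooth tangential field near $x_0$.

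The key identity comes from expanding a general unit vector into tangential and $\beta$-components. At points of $\partial\Omega$ it gives
\[
u_{11}(x)\le |\xi'|^2 M_2^+ + 2(\xi\cdot\nu)\,\frac{u_{\xi'\beta}}{\beta\cdot\nu}+(\xi\cdot\nu)^2\frac{u_{\beta\beta}}{(\beta\cdot\nu)^2}.
\]
Here $u_{\xi'\beta}$ is bounded by (3.17). The term $u_{\beta\beta}$ is controlled by Lemma 3.1, which gives $\epsilon M_2+C_\epsilon(1+M_2^+)$, since $M_2'$ is comparable to $M_2^+$ once lower bounds follow from $\Gamma\subset\Gamma_1^+$ and $\Gamma\subset\mathcal P_{n-1}$. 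Since $|\xi'|^2\le 1-c(\xi\cdot\nu)^2$ and $\xi\cdot\nu=O(|x-x_0|)$, we obtain on $\partial\Omega$
\[
v:=u_{11}-\frac{u_{\beta\beta}}{(\beta\cdot\nu)^2}(\xi\cdot\nu)^2-\frac{2u_{\xi'\beta}}{\beta\cdot\nu}(\xi\cdot\nu)\le M_2^+\bigl(1-c\,|x-x_0|^2\bigr)+C,
\]
with equality $v(x_0)=M_2^+$. (If $M_2^+$ is small there is nothing to prove, so we may assume $M_2^+\ge 1$.)

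We then compare $v$, modified to $w=v+\frac{c_1}{2}|u_1|^2$ as in Section 3.1 to absorb $F^{ij}A^{kl}_{ij}u_{k1}u_{l1}$, against a barrier in $\Omega_\rho$. The computation (3.7)--(3.8), using $F2$ and either $F4^+$ or $B_{pp}\ge0$, gives $\mathcal L w\ge -C\mathcal F(1+M_2)$. The lower-order terms involving $u_{\beta\beta}$ are handled through the function $\overline v$ of Lemma 3.1 with (2.13): they are quasilinear in $D^2u$ because $G$ is quasilinear. Our barrier is
\[
\Phi = M_2^+\bigl(1-c|x-x_0|^2\bigr)+C + K\phi,
\]
with $\phi=d-td^2$ as in (3.25). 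By (3.28) we have $L\phi\le -\mathcal F$ with no convexity of $\Omega$ needed, and $L|x-x_0|^2\le C\mathcal F$. Choosing $K\approx C(\epsilon M_2+C_\epsilon)$ makes $\mathcal L(w-\Phi)\ge0$ in $\Omega_\rho$ and $w\le\Phi$ on $\partial\Omega_\rho$; on the inner part this holds because $\phi\ge\frac34\rho$. The maximum principle then gives $w\le\Phi$, and since equality holds at $x_0$, we get $D_\beta(w-\Phi)(x_0)\le0$.

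This bounds $u_{11\beta}(x_0)$ from above by $CK+C M_2^+$. On the other hand, differentiating the boundary condition twice tangentially, using $G_{pp}=0$, gives
\[
u_{\beta11}=-\,2\beta^k_{\ ,1}u_{k1}-\kappa\,u_{\nu\beta}\cdots+O(1),
\]
where the curvature term produces $-\,(\text{tangential derivative of }\beta)\cdot u_{11}$ type contributions. The main obstacle is that, without $(\Gamma,A,G)$-convexity of the boundary, the coefficient of $M_2^+$ in this identity has no favorable sign. We plan to overcome it as in Remark 1.1: use $\Gamma\subset\mathcal P_{n-1}$ to bound $u_{\beta\beta}$ and the normal components from below by $-C M_2^+$, and add a large multiple $\lambda M_2^+\phi$ to the barrier. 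Its normal derivative $\lambda M_2^+\beta\cdot\nu\ge\lambda\beta_0M_2^+$ dominates the bad curvature term, and its cost in the interior inequality is absorbed by (3.28), since $t$ can be taken large thanks to $\gamma>1$.

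Collecting terms gives $\lambda\beta_0 M_2^+\le CM_2^++\epsilon M_2+C_\epsilon$. However, $\lambda$ multiplies $M_2^+$ on both sides, so this step needs care. We expect to resolve it by scaling $c$ in $|x-x_0|^2$ and using $a=1/(1+\epsilon_1M_2)$ as in (3.36), which finally yields $M_2^+\le\epsilon M_2+C_\epsilon$.
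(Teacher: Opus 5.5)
Your proposal does not close, and the step you flag at the end is not a technicality. A boundary-barrier argument cannot give this estimate without a sign condition on $\partial\Omega$.

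\textbf{Why the barrier route fails.}
\begin{itemize}
\item Comparing $w$ with an upper barrier $\Phi$ and using $w(x_0)=\Phi(x_0)$ only gives the upper bound $D_\beta w(x_0)\le D_\beta\Phi(x_0)$.
\item Differentiating the quasilinear boundary condition twice gives an identity for $u_{\beta11}(x_0)$ in which the coefficient of $u_{11}$ and the curvature terms have no sign. In effect you only get $u_{\beta11}(x_0)\ge -C(1+M_2)$, which is (3.46).
\item Adding $\lambda M_2^+\phi$ to the barrier only raises $D_\beta\Phi(x_0)$ by $\lambda M_2^+\beta\cdot\nu$. That term sits on the upper-bound side, which is exactly why $\lambda$ multiplies $M_2^+$ on both sides. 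Rescaling $c$ or $a=1/(1+\epsilon_1M_2)$ does not change the direction of that inequality.
\end{itemize}

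\textbf{Intermediate claims that also fail.}
\begin{itemize}
\item $K\approx C(\epsilon M_2+C_\epsilon)$ is not attainable. On $\partial\Omega_\rho\cap\Omega$ your $w$ contains $u_{11}$, which is bounded only by $M_2$, so you need $K\gtrsim M_2/\rho$. Lemma 3.1 escaped this because $\overline v$ is first order.
\item Your $w$ contains $-(\xi\cdot\nu)^2u_{\beta\beta}/(\beta\cdot\nu)^2$ and $-2(\xi\cdot\nu)u_{\xi'\beta}/(\beta\cdot\nu)$ in the interior. Applying $\mathcal L$ to these gives $F^{ij,kl}$-terms of the wrong sign (for the first) or no sign (for the second), plus third-derivative cross terms. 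So $\mathcal Lw\ge-C\mathcal F(1+M_2)$ is unjustified.
\item For genuinely oblique $\beta$, $|\xi'|^2=1-2\frac{(\xi\cdot\nu)(\xi\cdot\beta)}{\beta\cdot\nu}+\frac{(\xi\cdot\nu)^2|\beta|^2}{(\beta\cdot\nu)^2}$ has a first-order term of indefinite sign. Also $(\xi\cdot\nu)^2\ge c|x-x_0|^2$ fails in general. So your boundary bound $v\le M_2^+(1-c|x-x_0|^2)+C$ is false.
\item The lemma does not assume $\Gamma\subset\mathcal P_{n-1}$. So using Lemma 3.1 with $M_2'\sim M_2^+$ is not available here, and it is not needed.
\end{itemize}

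\textbf{The missing idea.} Force the maximum into the interior and use the good quadratic term $C\mathcal F|Du_1|^2$ of (3.8) there. Your proposal discards that term when it passes to $\mathcal Lw\ge-C\mathcal F(1+M_2)$.

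The paper keeps $v_1=u_{11}+\frac c2u_1^2$ as the only second-order quantity in the interior. It freezes the mixed term at the maximum point (the constant $g(0)$) and bounds $b^2u_{\beta\beta}$ crudely by $C(1+M_2)|x|^2$. It then compares multiplicatively, $\widetilde v_1\le f\,\widetilde v_1(0)$ on $\partial\Omega$, where $f=1-2\frac{\beta_1}{\beta\cdot\nu}(0)\nu_1+C_1|x|^2\ge\delta>0$ absorbs the indefinite first-order term.

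Next it maximizes $v=\widetilde v_1-f\widetilde v_1(0)-K(1+M_2)\phi$, with $\phi\le0$ a defining function. The large normal derivative $K(1+M_2)\beta\cdot\nu$ now sits in the function being maximized. Together with the crude bound (3.46), it rules out a boundary maximum.

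At the interior maximum, $\mathcal Lv\le0$ and (3.8) give $C\mathcal F|Du_1|^2\le C(1+\mathcal F)(1+M_2+|Du_1|)$. Hence $|Du_1|(x_0)\le C\sqrt{1+M_2}$. Choosing $\phi$ with $-K\phi\le\epsilon/4$ (its $C^2$ norm only enters the constants) yields $v_1(0)\le C\sqrt{1+M_2}+\frac{\epsilon}{4\delta}(1+M_2)$, which is $\epsilon M_2+C_\epsilon$ after renaming $\epsilon$.

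This square-root gain, coming from $F2$ and the uniform ellipticity given by $\gamma>1$, is what replaces boundary convexity. A barrier comparison at the boundary point only yields bounds linear in $M_2$ with an uncontrolled coefficient.
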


\begin{proof}

Extending unit inner normal vector $\nu$ and $G$ to $\overline{\Omega}$ and in a neighborhood of $\partial\Omega$, preserving their constancy along the normal direction. Construct auxiliary function
\begin{equation}
    \tag{3.39}
    v_\tau=u_{\tau\tau}+\frac{c}{2}|u_\tau|^2.
\end{equation}
Assume that on $\partial\Omega$ and for tangent vectors $|\tau|\le1$, the maximum is attained at $x_0=0$ and $\tau_0=e_1$.
Construct
\begin{equation}
    \tag{3.40}
     b=\frac{\nu_1}{\beta\cdot\nu},\ \ \tau=e_1-b\beta.
\end{equation}

In this section, we define $u_\tau,v_\tau:=\sum_iu_i\tau_i,\sum_iv_i\tau_i$, where $\tau$ is not necessarily a unit vector. A direct computation shows:
\begin{equation}
\tag{3.41}
\begin{split}
   &v_1=v_\tau+b(2u_{\beta\tau}+cu_\beta u_\tau)+b^2(u_{\beta\beta}+\frac{c}{2}u_\beta^2), \\
   &v_1(0)=v_\tau(0),\ \ b(0)=0,\ \ \tau(0)=e_1.
\end{split}
\end{equation}

Setting $g:=\frac{1}{\beta\cdot\nu}(2u_{\beta\tau}+cu_\beta u_\tau)$, Using (3.17) and mean value theorem, this yields 
\begin{equation}
    \notag
   |g(x)-g(0)|\le C(1+M_2)|x|,\ \ |\nu_1(x)|=|\nu_1(x)-\nu_1(0)|\le C|x|, \,  \mathrm{on}  \ \partial\Omega.
\end{equation}
where the constant $C$ depends on $G,\Omega,\beta_0$ and $||u||_{C^1}$.

Such that the function $\widetilde{v}_1$ defined below satisfies
\begin{equation}
    \tag{3.42}
    \begin{split}
    &\widetilde{v}_1=v_1-g(0)\nu_1-C_1(1+M_2)|x|^2,\\
    &\widetilde{v}_1\le v_\tau\le|\tau|^2\frac{v_\tau}{|\tau|^2}\le|\tau|^2v_1(0)=|\tau|^2\widetilde{v_1}(0), \,  \mathrm{on} \ \partial\Omega,
    \end{split}
\end{equation}
where the constant $C_1$ depends on the same quantities as $C$.

Since $G$ is quasilinear ($\beta=\beta(x,u)$), we can estimate $|\tau|^2 $ as
\begin{equation}
    \tag{3.43}
    |\tau|^2\le1-2b_1\beta_1+b^2\sup_\Omega|\beta|^2\le1-2\frac{\beta_1}{\beta\cdot\nu}(0)\nu_1+C_1|x|^2, \ \ \mathrm{on} \ \  \Omega.
\end{equation}

So set $f:=1-2\frac{\beta_1}{\beta\cdot\nu}(0)\nu_1+C_1|x|^2$, $f(0)=1$ and then it has
\begin{equation}
    \notag
    \widetilde{v}_1\le f \widetilde{v}_1(0), \,  \mathrm{on}  \ \partial\Omega.
\end{equation}

Next, we differentiate $G=\sum_i\beta_i(x,u)u_i-\varphi(x,u)$ along the tangential direction $\tau$ at point $0\in\partial\Omega$, up to second order. It has
\begin{equation}
    \tag{3.44}
    \begin{split}
    G_\tau=&(G_{x_{i}}+G_zu_i+G_{p_k}u_{ki})\tau_i=0,\\
    G_{\tau\tau}=&(G_{x_{i}x_{i}}+G_{x_{i}z}u_j+G_{x_{i}p_l}u_{lj}+G_{zx_j}u_i+G_{zz}u_iu_j+G_{zp_l}u_iu_{jl}+G_zu_{ij}+\\
    &G_{x_jp_k}u_{ki}+G_{zp_k}u_ju_{ki}+G_{p_lp_k}u_{lj}u_{ki}+G_{p_k}u_{kij})\tau_i\tau_j=0.
    \end{split}
\end{equation}
Since $\tau$ is a fixed tangential direction at $0$, we may extend it to a constant vector field in a neighborhood of $0$. Differentiating $G=0$ along this constant field and evaluating at $0$ yields the desired tangential derivative identities.

Substituting $\tau(0)=e_1$ into (3.44), it becomes 
\begin{equation}
    \tag{3.45}
    \begin{split}
         G_{11}=&G_{x_{1}x_{1}}+G_{x_{1}z}u_1+G_{x_{1}p_l}u_{l1}+G_{zx_1}u_1+G_{zz}u_1u_1+G_{zp_l}u_1u_{1l}+G_zu_{11}+\\
    &G_{x_1p_k}u_{k1}+G_{zp_k}u_1u_{k1}+G_{p_lp_k}u_{l1}u_{k1}+G_{p_k}u_{k11}=0.
    \end{split}
\end{equation}
Since $G$ is quasilinear, $D^2_pG\equiv0$. So it can be shown that
\begin{equation}
    \tag{3.46}
    G_{p_k}u_{k11}(0)=D_\beta u_{11}(0)\ge-C(1+M_2),
\end{equation}
where $C$ is a constant depending on $||G||_{C^2}$ and $||u||_{C^1}$.

Moreover, we construct the auxiliary function
\begin{equation}
    \tag{3.47}
    v=\widetilde{v}_1-f\widetilde{v}_1(0)-K(1+M_2)\phi,
\end{equation}
where $K$ is a constant depending on the same quantities as $C_1$ and $\phi\in C^2(\overline{\Omega})$ is a negative defining function satisfying $\phi\equiv0$ on $\partial\Omega$, $D_\nu\phi=-1$ on $\partial\Omega$. For $K$ sufficiently large, we can conclude that $v$ attains its maximum at an interior point $x_0\in\Omega$. Otherwise, it easy to see $v$ attains its maximum at $0$ on $\overline{\Omega}$, and we can use (3.46), $D_\nu\phi=1$ on $\partial\Omega$ and strict obliqueness (1.2) to prove $D_\beta v(0)>0$, it's a contradiction. Applying $L$ defined in (2.4) to $v$ at the point $x_0$, by(1.8), we obtain
\begin{equation}
    \tag{3.48}
    \mathcal{L}v(x_0)=F^{ij}(\gamma\delta_{ij}\Delta v-v_{ij}-A_{ij}^{P_k}D_kv)-D_{P_k}B(x,u,Du)D_kv=F^{ij}(\gamma\delta_{ij}\Delta v-v_{ij})=\widetilde{F}^{ij}v_{ij}\le0.
\end{equation}
Using the result in (3.8) and calculating $\mathcal{L}v$ term by term under the same hypothesis as in Theorem 1.1, we obtain
\begin{equation}
    \tag{3.49}
    0\ge  \mathcal{L}v\ge C\mathcal{F}|Du_1|^2-C(1+\mathcal{F})|Du_1|-C(1+\mathcal{F})(1+M_2),
\end{equation}
at the point $x_0\in\Omega$. Which implies 
\begin{equation}
    \tag{3.50}
    |Du_1|(x_0)\le C\sqrt{1+M_2},
\end{equation}
where $C$ is a constant depending on $A$ and, in addition, on $K|\phi|_{C^2;\Omega}$.

Finally, for any $\epsilon>0$, fixing $\phi$ so that $-K\phi\le\frac{\epsilon}{4}$ in $\Omega$. It is straightforward to see that $f$ has positive lower bound $\delta>0$. Since $x_0$ is the maximum point of $v$ on $\overline{\Omega}$, for any $x\in\overline{\Omega}$, it has
\begin{equation}
    \tag{3.51}
    v(x)\le v(x_0)\le \widetilde{v_1}(x_0)-f(x_0)\widetilde{v_1}(0)+\frac{\epsilon}{4}(1+M_2).
\end{equation}

Since $v_1(0)=\widetilde{v_1}(0)$, the assumption $v_1(0)\le0$ implies
\begin{equation}
    \notag
    u_{\tau\tau}\le u_{\tau\tau}+\frac{1}{2}|u_\tau|^2\le v_1(0)\le0.
\end{equation}
Thus every double tangential derivative on the boundary has upper bound 0, and the proof is complete. Now assume $v_1(0)\ge0$, Substituting $x=0$ into (3.51), we obtain
\begin{equation}
    \tag{3.52}
    \begin{split}
           0\le&\widetilde{v_1}(x_0)-f(x_0)\widetilde{v_1}(0)+\frac{\epsilon}{4}(1+M_2)\\
           \delta\widetilde{v_1}(0)\le&f(x_0)\widetilde{v_1}\le\widetilde{v_1}(x_0)+\frac{\epsilon}{4}(1+M_2)\\
           v_1(0)\le&\frac{1}{\delta}\widetilde{v_1}(x_0)+\frac{\epsilon}{4\delta}(1+M_2)\\
           v_1(0)\le&\frac{1}{\delta}(v_1(x_0)-g(0)\nu_1(x_0)-C_1(1+M_2)|x_0|^2)+\frac{\epsilon}{4\delta}(1+M_2)\\
           v_1(0)\le&C\sqrt{1+M_2}+C+\frac{\epsilon}{4\delta}(1+M_2)\\
            v_1(0)\le&\epsilon M_2+C_\epsilon,
    \end{split}
\end{equation}
where the constant $C_\epsilon$ depends on  $\epsilon,\gamma,A,B,G,\Omega,\beta_0$ and $||u||_{C^1;\Omega}$.  

Moreover, since $v_1(0)$ is the maximum of $v_\tau$ on the boundary under the constraint that $\tau$ is a unit tangent vector, we have
\begin{equation}
    \tag{3.53}
    \underset{\partial\Omega}{\sup}\underset{|\tau|\le1,\tau\cdot\nu=0}{\sup}u_{\tau\tau}\le\epsilon M_2+C_\epsilon.
\end{equation}

 \end{proof}
\subsection{Proof of Theorem 1.2}
\begin{proof}
For any unit vector $\xi\in R^n$, we set $s=\frac{\xi\cdot\nu}{\xi\cdot\beta}$, then we have $\tau=\xi-s\beta$, which is a tangential vector and
\begin{equation}
    \tag{3.54}
    u_{\xi\xi}=u_{\tau\tau}+2su_{\beta\tau}+s^2u_{\beta\beta}.
\end{equation}
 Form (1.2) and $|\beta|$ is bounded we know $s$ and $\tau$ is bounded. In the quasilinear case, by virtue of estimates (3.38), (3.17) and (3.18), under the hypothesis of Theorem 1.2, we then obtain the estimate
\begin{equation}
    \tag{3.55}
    \underset{\partial\Omega}{\sup}u_{\xi\xi}\le\epsilon M_2+C(\epsilon),
\end{equation}
for any constant $\epsilon>0$ and any unit vector $\xi\in R^n$, where $C(\epsilon)$ is a constant depending on $\epsilon,F,A,B,\gamma,G,\Omega,\beta_0$ and $||u||_{C^1;\Omega}$. The key hypothesis in this step is $\Gamma\subset\mathcal{P}_{n-1}$, which means the sum of any $n-1$ eigenvalues of $w=M[u]$ is nonnegative. Thus, this condition allows us to conclude that $M_2^+$ is qualitatively equivalent to $M_2'$, see \cite{jiang2018oblique} for more details.  

Then, given by admissibility (3.12), we can use the same method as (3.13)-(3.14) to get the lower bound of $D^2u$. Finally, we obtain
\begin{equation}
    \tag{3.56}
    \underset{\partial\Omega}{\sup}|D^2u|\le n\,\epsilon M_2+C(\epsilon),
\end{equation}
and the final estimate (1.12) follows by combining (3.56) with (1.10) or (3.15) and choosing $\epsilon$ sufficiently small. 
\end{proof}
 
\section{Gradient Estimates}

In this section, we prove the gradient estimate for admissible solutions $u$ of the oblique problem (1.1) under the assumption that the operator $G$ is semilinear and in particular give the proof of Theorem 1.3. Moreover we will employ improvements of the methods for uniformly elliptic equations in \cite{lieberman1986nonlinear}. The crucial point is how to use the tangential gradient terms. Before proving Theorem 1.3, we make some preparations first.

$G$ is semilinear, that means 
\begin{equation}
    \tag{4.1}
    G(x,z,p)=\beta(x)\cdot p-\varphi(x,z).
\end{equation}
Normalizing $G$: $\hat{G}:=\frac{G}{\beta\cdot\nu}=\hat{\beta}\cdot p-\hat{\varphi}$, and we relabel $\hat{G}$ as $G$ so that we can decompose the vector $\beta$ as $\beta=\beta'+\nu$, where $\beta'$ is a vector in the tangent plane of $\partial\Omega$ and $\nu$ is unit inner normal vector. Then we define the tangential gradient operator $\delta(\cdot)$ as follows.
\begin{equation}
    \tag{4.2}
     \delta(v)=Dv-(D_\nu v)\nu,\,   \mathrm{on} \  \partial\Omega. 
\end{equation}
For any function $v\in C^1(\overline{\Omega})$. The tangential gradient operator is a vector operator $\delta=(\delta_1,..,\delta_n)$ with $\delta_i=(\delta_{ij}-\nu_i\nu_j)D_j$. Under the assumptions of Theorem 1.3, $\beta'\in C^2(\partial\Omega),\varphi\in C^2(\partial\Omega\times\ \mathbb{R})$. Sicen  $\beta'$ is a vector in the tangent plane, we can rewrite (4.1) as 

\begin{equation}
    \tag{4.3}
     G(x,u,Du)=D_\nu u+\beta'\cdot\delta u-\varphi(x,z)=0, \,  \mathrm{on} \  \partial\Omega.
\end{equation}

For later use, we precompute several expressions. Given a function $g(x,z,p)$ as
\begin{equation}
    \tag{4.4}
    g(x,u,Du)=a_{kl}(x)u_ku_l+b_k(x,u)u_k+c(x,u),
\end{equation}
where $\{a_{kl}\}$ is a nonnegative matrix function on $\Omega$, $b_k(x,u),c(x,u)$ are scalar functions on $\Omega\times \mathbb{R}$, $a_{kl}\in C^2(\overline{\Omega}),b_k,c\in C^2(\overline{\Omega}\times \mathbb{R})$, and $u\in C^3(\Omega)$ is an admissible solution of (1.1). 

For the function $g$, directly computing its derivatives,  we have
\begin{equation}
    \tag{4.5}
     \begin{split}
    &g_{x_i}=D_{x_i}a_{kl}u_ku_l+D_{x_i}b_ku_k+D_{x_i}c,\ \ g_z=D_zb_ku_k+D_zc,\\
    &g_{p_k}=2a_{kl}u_l+b_k, \ \  \frac{\partial g}{\partial x_i}=g_{x_i}+g_zu_i+g_{p_k}u_{ki}.\\
    &g_{x_ix_j}=D_{x_jx_i}a_{kl}u_ku_l+D_{x_jx_i}b_ku_k+D_{x_jx_i}c,\ \ D_{p_l}D_{p_k}g=2a_{kl},\\
    &\Delta_xg=\Delta a_{kl}u_ku_l+\Delta_x b_ku_k+\Delta_x c,  \ \ D_{zz}g=D_{zz}b_ku_k+D_{zz}c,\\
    &\widetilde{D_{x_i}}D_{p_k}g=D_{x_i}a_{kl}u_l+\widetilde{D_{x_i}}b_k,\ \  \widetilde{D_{x_i}}D_{z}g=\widetilde{D_{x_i}}D_{z}u_k+\widetilde{D_{x_i}}D_{z}c,\\
    &\mathcal{L}g=Lg-D_{p_k}B(D_{x_k}g+D_zgu_k+D_{p_l}u_{kl}),
    \end{split}
\end{equation}
where $\widetilde{D}_{x_i}=D_{x_i}+u_iD_z$, $\Delta_xg(x,z,p)=\sum_{i=1}^n\frac{\partial^2 g}{(\partial_{x_i})^2}(x,z,p)$ and $\Delta_xb_k,\Delta_xc$ are the same meaning of $\Delta_xg(x,z,p)$.
Using (2.12) to compute the formula for $\mathcal{L}g$, after a straightforward computation, we obtain
\begin{equation}
    \tag{4.6}
    \begin{split}
      \mathcal{L}g&=(D_zb_ku_k+D_zc)\mathcal{L}u+(2a_{kl}u_l+b_k)\mathcal{L}u_k+u_ku_l\mathcal{L}a_{kl}\\
      &+2F^{ij}[\gamma\delta_{ij}a_{kl}\sum_mu_{km}u_{lm}-a_{kl}u_{ki}u_{lj}]+u_l[F^{ij}(\gamma\delta_{ij}\Delta b_l-D_{x_ix_j}b_l)-F^{ij}A_{ij}^kD_{x_k}b_l\\
      &-D_{P_k}BD_{x_k}b_l]+[F^{ij}(\gamma\delta_{ij}\Delta c-D_{x_ix_j}c-D_{x_k}c(F^{ij}A_{ij}^k+D_{p_k}B)]\\
      &+2\widetilde{F}^{ij}(2u_lD_{x_i}a_{kl}+\widetilde{D}_{x_i}b_k)u_{jk}
      +2\widetilde{F}^{ij}\gamma\delta_{ij}(\widetilde{D}_{x_i}D_zb_k+\widetilde{D}_{x_i}D_zc)u_j\\
      &-F^{ij}[\gamma\delta_{ij}(D_{zz}b_k+D_{zz}c)\sum_mu_m^2-(D_{zz}b_k+D_{zz}c)u_iu_j].
  \end{split}
\end{equation}
To estimate it, we need a new version of (2.11), namely
\begin{equation}
    \tag{4.7}
    \begin{split}
            \mathcal{L}u&=F^{ij}(\gamma\delta_{ij}\Delta u-\delta_{ik}u_{jk}-A_{ij}^{p_k}u_k)-D_{p_k}Bu_k,\\
            \mathcal{L}u_k&=F^{ij}\widetilde{D}_{x_k}A_{ij}+\widetilde{D}_{x_k}B.
            \end{split}
\end{equation}
Therefore, combining (4.6), (4.7), (2.14) and using the growth assumptions (1.13), it follows that
\begin{equation}
    \tag{4.8}
    \begin{split}
   \mathcal{L}g\ge &2F^{ij}[\gamma\delta_{ij}\sum_ma_{kl}u_{lm}u_{km}-a_{kl}u_{ki}u_{jl}]+\widetilde{F}^{ij}\widetilde{\beta}_{ik}u_{jk}
  \\&  -C(\mathcal{F}+1)(\omega(|Du|)|Du|^4+1),
    \end{split}
\end{equation}
where $\omega$ is a positive, monotonically decreasing function that tends to zero at infinity, depending on $A,B,\sup_\Omega|u|$ and $\widetilde{\beta}_{ik}=2(2u_lD_{x_i}a_{kl}+\widetilde{D}_{x_i}b_k)+(u_kD_zb_k+D_zc)\delta_{ik}$. If we assume that the matrix $\{a_{kl}\}$ is positive definite and satisfies $\{a_{kl}\}\ge a_1I$, then by applying Lemma 2.1, Cauchy's inequality and Young's inequality, with careful control of the coefficients, the estimate becomes
\begin{equation}
    \tag{4.9}
    \begin{split}
   \mathcal{L}g&\ge2a_1F^{ij}(\gamma\delta_{ij}\sum_mu_{km}u_{km}-u_{ki}u_{kj})-a_1\widetilde{F}^{ij}u_{ik}u_{jk}-C\widetilde{F}^{ij}\widetilde{\beta}_{ik}\widetilde{\beta}_{jk}+
   \\&-C(\mathcal{F}+1)(w(|Du|)|Du|^4+1)
   \\&=2a_1\widetilde{F}^{ij}u_{ki}u_{kj}-a_1\widetilde{F}^{ij}u_{ik}u_{jk}-C\widetilde{F}^{ij}\widetilde{\beta}_{ik}\widetilde{\beta}_{jk}-C(\mathcal{F}+1)(w(|Du|)|Du|^4+1)\\
    &\ge(\gamma-1)F^{ij}a_1\delta_{ij}\sum_mu_{km}^2-C\widetilde{F}^{ij}\widetilde{\beta}_{ik}\widetilde{\beta}_{jk}-C(\mathcal{F}+1)(w(|Du|)|Du|^4+1)\\
     &\ge a_1(\gamma-1)F^{ij}u_{ki}u_{kj}-C(\mathcal{F}+1)(w(|Du|)|Du|^4+1).
    \end{split}
\end{equation}
 The constant $C$ depends on $n,a_{kl},b_k,c,\Omega,A,B$ and $\sup_\Omega|u|$. Now we begin to prove Theorem 1.3.
\begin{proof}

For this problem, we introduce the auxiliary function
\begin{equation}
\tag{4.10}
    g:=|\delta u|^2+|G|^2=|Du|^2+|D_{\beta'}u|^2+2D_{\beta'}uD_\nu u-2D_\beta u\varphi+\varphi^2,
\end{equation}
where $\nu$ is the unit inner normal vector on $\partial\Omega$ and $G$ is given by (4.3). Then $\nu,G$ can be defined on $\overline{\Omega}$ by suitable extension. In essence, we extend the functions $\beta$, $\varphi$, and $\nu$ to the entire domain while preserving the form $G=D_\beta u-\varphi$.

Now the functions
 $g=a_{kl}(x)u_ku_l+b_k(x,u)u_k+c(x,u)$ have the explicit expression and $a_{kl}(x)=\delta_{kl}+\beta_k'\beta_l'+(\beta_k'\nu_l+\beta_l'\nu_k)$.
 The eigenvalues of the matrix $\{\beta_k'\beta_l'\}_{n\times n}$ are 
0 and $|\beta'|^2$. The eigenvalues of the matrix$\{(\beta_k'\nu_l+\beta_l'\nu_k)\}_{n\times n}$ are 
0, $|\beta'||\nu|$ and $-|\beta'||\nu|$. A direct analysis shows that the matrix $a_{kl}$
  is positive definite, with eigenvalues $1,1+\frac{|\beta'|^2}{2}\pm\frac{\sqrt{|\beta'|^4+4|\beta'|^2}}{2}$. Then it follows that
\begin{equation}
    \tag{4.11}
    \{a_{kl}\}\ge\frac{I}{1+|\beta'|^2+|\beta'|}\ge\frac{I}{(1+|\beta'|)^2}.
\end{equation}

We set $\beta_0'=\sup_\Omega|\beta'(x)|$, then (4.9) holds with $a_1=\frac{1}{(1+\beta_0')^2}$. Furthermore, applying Cauchy's inequality and Young's inequality yields
 \begin{equation}
     \tag{4.12}
    \frac{a_1}{2}|Du|^2-C_1\le\frac{a_1}{2}|Du|^2+(1-\frac{4}{a_1^2})|\varphi|^2\le g\le\frac{2}{a_1}|Du|^2+2|\varphi|^2\le\frac{2}{a_1}|Du|^2+C_2,
 \end{equation}
where the constant $C_1,C_2$ depend on $||\beta||_{C_0}$ and $||\varphi||_{C_0}$. Next, we introduce the final auxiliary function,
 \begin{equation}
     \tag{4.13}
     v:=g+M_1^2(\alpha\eta-\mathit{k}\phi),
 \end{equation}
where $\alpha,\mathit{k}$ are positive constants to be determined later and $\phi\in C^2(\Omega)$ is a negative defining function satisfying $\phi\equiv0$, $D_\nu\phi\equiv-1$ on $\partial\Omega$. $M_1=\sup_\Omega|Du|$ and $\eta$ is defined  as
 \begin{equation}
     \notag
     \eta=e^{K(u-u_0)},
 \end{equation}
 where $u_0:=\underset{\Omega}{\inf}\,u$ and $K$ is a positive constant to be determined later.

Due to the oblique boundary condition $<\beta,\nu>\ge\beta_0$ on $ \partial\Omega$, we can deduce that there exists a number $a\in(0,1)$ such that $D_\beta \phi<-a,\mathrm{on}\ \partial\Omega$.

Next, we analyze the maximum point $x_0$ of the function $v$ on $\overline{\Omega}$. We assume that $|Du|(x_0)>1$, otherwise, we can prove Theorem 1.3 immediately. Two cases need to be considered.

Case (i)：$x_0\in\partial\Omega$. So we have
\begin{equation}
    \tag{4.14}
    D_\beta v=D_\beta g +M_1^2(\alpha K\eta D_\beta u-\mathit{k}D_\beta \phi)\le0.
\end{equation}
 From the boundary condition (4.1) and $D_\beta \phi<-a$, we obtain
 \begin{equation}
     \tag{4.15}
     D_\beta g +M_1^2(\alpha K\eta D_\beta u-\mathit{k}D_\beta \phi)\ge D_\beta g +a\mathit{k}M_1^2-\alpha K\eta|\varphi|M_1^2.
 \end{equation}

Now we compute the expression of $D_\beta g$. By our definition, on $\partial\Omega$,
\begin{equation}
    \tag{4.16}
g=|Du-(D_\nu u)\nu|^2, \ \ Dg=D(|Du-(D_\nu u)\nu|^2).
\end{equation}

It is straightforward to verify that the vector $\tau_k=(-\nu_k\nu_1,-\nu_k\nu_2,....1-\nu_k^2,....,-\nu_k\nu_n)$ is a tangential vector on $\partial\Omega$. Differentiating the boundary condition (4.1) along the vector $\tau_k$ also yields 0 at $x_0$. This result implies 
\begin{equation}
    \tag{4.17}
   \begin{split}
        \delta_k\varphi&=\delta_{x_k}\varphi+(D_z\varphi)\delta_{k}u:=\widetilde{D}_{\tau_k}\varphi=D_{\tau_k}(D_\beta u)=(\delta_k\beta_i)u_i+\beta_iu_{ik}-\beta_i\nu_k\sum_jv_ju_{ij}\\&=
       (\delta_k\beta_i)u_i+\beta_iu_{ik}-\beta_i\nu_k\sum_jv_ju_{ij}-\beta_iD_i\nu_{k}D_\nu u-\beta_i\nu_kD_i\nu_ju_j+\beta_iD_i\nu_{k}D_\nu u+\beta_i\nu_kD_i\nu_ju_j\\
       &=(\delta_k\beta_i)u_i+D_\beta(\delta_ku)+\beta_iD_i\nu_{k}D_\nu u+\beta_i\nu_kD_i\nu_ju_j.
   \end{split}
\end{equation}
and it is not hard to prove that $|\delta u|^2=|Du|^2-\sum_{k,m}u_ku_m\nu_k\nu_m\le|Du|^2$. Using this fact, (4.17) and Cauchy's inequality, we obtain
 \begin{equation}
     \tag{4.18}
     \begin{split}
        D_\beta g&=2(\delta_ku)D_\beta\delta_ku\\
        &=2(\delta_ku)[\delta_k\varphi-(\delta_k\beta_i)u_i-\beta_iD_i\nu_{k}D_\nu u-\beta_i\nu_kD_i\nu_ju_j]\\
        &\ge-C(1+|Du|^2).
     \end{split}
 \end{equation}

Substituting (4.18) into (4.15), it becomes
\begin{equation}
    \tag{4.19}
    D_\beta v\ge-C(1+|Du|^2)+M_1^2[a\mathit{k}-\alpha K\eta|\varphi|(x,u)]\ge\frac{1}{2}\mathit{k}M_1^2-C,
\end{equation}
where we require $\mathit{k}\ge C(1+\alpha K\underset{\partial\Omega}{\sup}\,\eta)$ and $C$ is a constant depending on $\beta,\varphi,\Omega$ and $M_0:=\underset{\Omega}{\sup}|u|$.

Finally, from (4.14) and (4.19), we obtain
\begin{equation}
    \tag{4.20}
    M_1^2\le\frac{2C}{\mathit{k}}.
\end{equation}
Then the Gradient of admissible function $u$ is bounded.
  \vspace{0.4cm}

 Case（ii）$x_0\in\Omega.$ We change the requirements of constants $\alpha,\mathit{k}$ to 
\begin{equation}
    \tag{4.21}
    \alpha\le\frac{a_1}{16}\underset{\Omega}{\sup}\,\eta, \ \ \phi\ge-\frac{a_1}{16}\mathit{k}, \,  \mathrm{in} \  \Omega. 
\end{equation}
The purpose of this assumption is clear: if we can prove that $|Du|(x_0)\le C$ for some constant $C$, then it follows from the definition of $v$ that
\begin{equation}
    \tag{4.22}
    v(x_0)\le C'+\frac{a_1}{8}M_1^2,
\end{equation}
where constant $C'$ depends on $\beta,\varphi$ and C.  

Then because $x_0$ is the maximum point of $v$, it can show
\begin{equation}
\tag{4.23}
    g(x)\le v(x)=g(x)+M_1^2(\alpha\eta-\mathit{k}\phi)(x)\le C'+\frac{a_1}{8}M_1^2, \,  \forall x\in\overline{\Omega}.
\end{equation}
Substituting (4.23) into (4.12), we obtain
\begin{equation}
    \tag{4.24}
    \frac{a_1}{2}|Du|^2-C_1\le C'+\frac{a_1}{8}M_1^2, \,  \forall x\in\overline{\Omega}.
\end{equation}
The desired result then follows by taking the supremum on both sides over $\overline{\Omega}$.
\begin{equation}
    \tag{4.25}
    \frac{3a_1}{8}M_1^2\le C',
\end{equation}
where $C'$ is a constant depending on $\beta$, $\varphi$ and C. We now proceed to prove its existence. It suffices to prove $|Du|(x_0)\le C$. 

Since $x_0$ is the maximum point of $v$ in $\Omega$, $Dv(x_0)=0$ and
\begin{equation}
    \tag{4.26}
    \mathcal{L}v(x_0)=F^{ij}(\gamma\delta_{ij}\Delta v-v_{ij})(x_0)=\widetilde{F}^{ij}v_{ij}(x_0)\le0,
\end{equation}
where $\widetilde{F}^{ij}$ is defined as $\widetilde{F}^{ij}=\delta_{ij}\gamma\sum_{k}^nF^{kk}-F^{ij}=\gamma\delta_{ij}\mathcal{F}-F^{ij}$, the same as (1.8).

To analysis $\mathcal{L}v(x_0)$, the key point is to compute $\mathcal{L}\eta(x_0)$. We need to impose some new growth conditions as in Theorem 1.3, namely  
\begin{equation}
\tag{4.27}
    p\cdot A_p\le O(|p|^2)I, \ \ p\cdot B_p\le O(|p|^2),\ \ A=O(|p|^2),
\end{equation}
as $|p|\to\infty$, uniformly for $x\in\Omega,|z|\le M$ for any $M>0$. Then it follows that
\begin{equation}
    \tag{4.28}
    \begin{split}
        \mathcal{L}\eta&=F^{ij}(\gamma\delta_{ij}\Delta\eta-\eta_{ij}-A_{ij}^{p_k}D_k\eta)-D_{p_k}BD_k\eta  \\
        &=F^{ij}[\gamma\delta_{ij}(K\eta\Delta u+K^2\eta|Du|^2)-K^2\eta u_iu_j-K\eta u_{ij}-K\eta A_{ij}^{p_k}u_k]-D_{p_k}BK\eta u_k\\
         &=K\eta[\widetilde{F}^{ij}u_{ij}+K\widetilde{F}^{ij}u_iu_j-(F^{ij}A_{ij}^{p_k}+D_{p_k}B)u_k]\\
         &\ge K\eta[\widetilde{F}^{ij}u_{ij}+K\widetilde{F}^{ij}u_iu_j-\mu(\mathcal{F}+1)(1+|Du|^2)],
    \end{split}
\end{equation}
where constant $\mu$ depends on $M_0$. From Sect. 1, we know that the eigenvalues of the matrix $\{\widetilde{F}^{ij}\}$ are $\gamma\mathcal{F}-\lambda_i$, where $\lambda_i$ are the eigenvalues of the matrix $\{F^{ij}\}$. Consequently, the smallest eigenvalue of this matrix is greater than $(\gamma-1)\mathcal{F}$. That means
\begin{equation}
    \tag{4.29}
    \widetilde{F}^{ij}u_iu_j\ge(\gamma-1)\mathcal{F}|Du|^2.
\end{equation}

According to assumption (1.6) $F_{ij}w_{ij}=F=B\ge0$, Using (4.27) we can obtain 
\begin{equation}
    \tag{4.30}
    \widetilde{F}^{ij}u_{ij}\ge F^{ij}A_{ij}\ge-C\mu_1\mathcal{F}|Du|^2,
\end{equation}
where $\mu_1$ is a constant depending on $M_0=\underset{\Omega}{\sup}|u|$.

According to assumption $F4$, $\mathcal{F}>\delta_0$, we can fix the constant $K$ such that $K>\frac{4\mu(1+\frac{1}{\delta_0})+2C\mu_1}{\gamma-1}$. 

Plugging (4.30) and (4.29) into (4.28), we can obtain

\begin{equation}
    \tag{4.31}
    \mathcal{L}\eta\ge \frac{1}{2}K\eta(1+\mathcal{F})|Du|^2.
\end{equation}

We now proceed to rewrite (4.26) in full detail. Using (4.9), (4.31) and growth conditions, we obtain
\begin{equation}
    \tag{4.32}
    \begin{split}
    0\ge\mathcal{L}v(x_0)&=\mathcal{L}g+\alpha M_1^2\mathcal{L}\eta-\mathit{k}M_1^2\mathcal{L}\phi\\
    &\ge\frac{a_1(\gamma-1)}{2}F^{ij}u_{ki}u_{kj}-C(\mathcal{F}+1)(w(|Du|)|Du|^4+1)\\
    &+\alpha M_1^2\frac{K}{2}(1+\mathcal{F})|Du|^2-C\mathit{k}M_1^2(1+\mathcal{F})w(|Du|)|Du|^2\\
     &\ge(1+\mathcal{F})[\alpha M_1^2\frac{K}{2}|Du|^2-C(w(|Du|)|Du|^4+1+\mathit{k}M_1^2w(|Du|)|Du|^2)].
    \end{split}
\end{equation}
For sufficiently large $|Du|$, the quantity $w(|Du|)$ becomes arbitrarily small. As the parameters $\alpha,\mathit{k}$ and $K$ are fixed, we can assume $|Du|(x_0)$ is sufficiently large such that $C(1+k)\omega(|Du|)\le\frac{\alpha K}{4}$, otherwise $|Du|(x_0)$ is bounded, we already get the result.  Consequently, we obtain
\begin{equation}
    \tag{4.33}
    \frac{\alpha M_1^2K}{4}|Du|^2(x_0)\le C\to|Du|(x_0)\le C,
\end{equation}
where the constant $C$ depends on $F,A,B,\Omega,\gamma,b_0,G$ and $M_0$.
This completes the proof of Case (ii) and consequently establishes the boundedness of $M_1=\underset{\Omega}{\sup}|Du|$.
    
\end{proof}

\section{Proof of Theorem 1.4}
\begin{proof}
After we complete the proof of Theorem 1.2 and Theorem 1.3, it remains to establish $C^0$ estimates. Under the hypothesis of Theorem 1.4, $C^0$ estimates can be directly obtained by the existence of the admissible subsolution $\overline{u}$ and supersolution $\underline{u}$. Moreover, if $u\in C^2(\Omega)\cap C^1(\overline{\Omega})$ is an admissible solution of the equation (1.1), by comparison principle, we have
\begin{equation}
\tag{5.1}
\underline{u}\le u\le\overline{u},\, \mathrm{in}\, \ \overline{\Omega}.
\end{equation}
 Then based on the a priori estimates for problems (1.1) in this paper, $C^{2,\alpha}$  $(0<\alpha<1)$ estimates can be achieved by Evans-Krylov theory and higher order estimates followed by Schauder thoery. The existence result can be concluded by the continuity method and using the monotonicity conditions on $A,B,\varphi$ we can derive the uniqueness by classical comparison principle. The higher regularity of functions $\beta$ and $\varphi$ is needed for the proof. More details see \cite{gilbarg1977elliptic}. Theorem 1.4 can be proven by approximation as in \cite{jiang2018oblique}.

\end{proof}

\bibliographystyle{plainnat}
\bibliography{sample}
\end{document}